\documentclass[a4paper,11pt,titlepage,twoside]{article}

\usepackage{graphicx}
\usepackage[T1]{fontenc} 
\usepackage[utf8]{inputenc}
\usepackage[english]{babel}
\usepackage{soul}
\usepackage{amsfonts}
\usepackage{amsmath}
\usepackage{amsthm}
\usepackage{amssymb}
\usepackage{mathrsfs}
\usepackage[top=5cm, bottom=3cm, left=3.2cm, right=3.2cm]{geometry}
\usepackage{setspace}
\usepackage{afterpage}
\usepackage{extarrows}
\usepackage{fancyhdr}
\usepackage{titlesec}
\usepackage{enumitem} \setlist{nosep}
\usepackage[pdftex,breaklinks,colorlinks,linkcolor=blue,
anchorcolor=blue]{hyperref}

\theoremstyle{defin}
\newtheorem{defin}{Definition}[section]
\theoremstyle{plain}
\newtheorem{theo}[defin]{Theorem}
\newtheorem{lem}[defin]{Lemma}
\newtheorem{pro}[defin]{Proposition}
\newtheorem{cor}[defin]{Corollary}
\theoremstyle{defin}
\newtheorem{exm}[defin]{Example}
\newtheorem{rem}[defin]{Remark}

\newtheorem{quest}{Question}

\renewcommand{\O}{\Omega}
\newcommand{\D}{\mathcal{D}}
\renewcommand{\H}{\mathcal{H}}

\newcommand{\n}[1]{\|#1\|}
\newcommand{\nor}{\|\cdot\|}

\renewcommand{\l}{\langle}
\renewcommand{\r}{\rangle}
\newcommand{\N}{\mathbb{N}}
\newcommand{\Z}{\mathbb{Z}}

\newcommand{\R}{\mathbb{R}}
\newcommand{\C}{\mathbb{C}}

\newcommand{\pint}{\l\cdot,\cdot\r}
\newcommand{\pin}[2]{\l#1 , #2\r}
\newcommand{\no}{\noindent}
\newcommand{\ol}{\overline}

\newcommand{\mez}{{1/2}}

\renewcommand{\t}{\Omega}

\newcommand{\wh}{\widehat}
\newcommand{\spann}{\overline{\text{span}}}

\newcommand{\widecheck}{\rotatebox[origin=t]{-180}{$\widehat{}$}}
\newcommand{\widechec}{\rotatebox[origin=b]{-180}{$\widehat{}$}}
\newcommand{\wch}[1]{\overset{\widechec}{#1}}
\numberwithin{equation}{section}

\renewcommand\labelenumi{\emph{(\roman{enumi})}}
\renewcommand\theenumi\labelenumi

\setlength{\headsep}{10pt}

\titleformat{\section}
{\normalfont\fillast \fontsize{12}{15}\scshape}{\thesection.}{0.8em}{}

\titleformat{\subsection}
{\normalfont\fillast \fontsize{11}{12}\scshape}{\thesubsection.}{0.8em}{}

\pagestyle{fancy}

\fancyhf{}

\fancyhead[CE]{{\footnotesize  ROSARIO CORSO}}
\fancyhead[CO]{ {\fontsize{7}{10}    \textsc{\uppercase{Generalized frame operator, lower  semi-frames, sequences of translates}}}}
\fancyhead[LE]{{\small \thepage}}
\fancyhead[RO]{{\small  \thepage}}

\begin{document}
	
\thispagestyle{plain}

\begin{center}
	\large
	{\uppercase{\bf Generalized frame operator, lower\\ semi-frames and sequences of translates}} \\
	\vspace*{0.5cm}
	{\scshape{Rosario Corso}}
\end{center}

\normalsize 
\vspace*{1cm}	

\small 

\begin{minipage}{12.8cm}
	{\scshape Abstract.} 
Given an arbitrary sequence of elements $\xi=\{\xi_n\}_{n\in \mathbb{N}}$ of a Hilbert space $(\H,\pint)$, the operator $T_\xi$ is defined as the operator associated to the sesquilinear form
$
\O_\xi(f,g)=\sum_{n\in \N} \pin{f}{\xi_n}\pin{\xi_n}{g},
$ 
for $f,g\in \{h\in \H: \sum_{n\in \N}|\pin{h}{\xi_n}|^2<\infty\}$. This operator is in general different from the classical frame operator but possesses some remarkable properties. For instance, $T_\xi$ is always self-adjoint in regards to a particular space, unconditionally defined and, when $\xi$ is a lower semi-frame, $T_\xi$ gives a simple expression of a dual of $\xi$. The operator $T_\xi$ and lower semi-frames are studied in the context of sequences of integer translates of a function of $L^2(\R)$. In particular, an explicit expression of $T_\xi$ is given in this context and a characterization of sequences of integer translates which are lower semi-frames is proved.
\end{minipage}

\vspace*{.5cm}

\begin{minipage}{12.8cm}
	{\scshape Keywords:} lower semi-frames, sesquilinear forms, associated operators, duality, sequences of translates
\end{minipage}

\vspace*{.5cm}

\begin{minipage}{11.8cm}
	{\scshape MSC (2010):} 42C15, 47A07, 42C40, 46E30. %
\end{minipage}

\normalsize

\section{Introduction}

A frame of a Hilbert space $\H$ with inner product $\pint$ and norm $\nor$ is a sequence of elements $\{\xi_n\}_{n\in \N}$ of $\H$ such that 
\begin{equation}
	\label{def_frame}
	A\n{f}^2\leq \sum_{n\in \N} |\pin{f}{\xi_n}|^2\leq B\n{f}^2, \qquad \forall f\in \H
\end{equation}
for some $A,B>0$ (called {\it frame bounds}). If $\{\xi_n\}_{n\in \N}$ is a frame, then the operator $S_\xi f=\sum_{n\in \N}\pin{f}{\xi_n}\xi_n$ is well-defined for every $f\in \H$, bounded, bijective and it is called the {\it frame operator} to $\{\xi_n\}_{n\in \N}$. In addition, also $\{S_\xi^{-1}\xi_n\}_{n\in \N}$ is a frame (called the {\it canonical dual frame} of $\H$) and the reconstruction formulas
\begin{equation}
	\label{rec_frame}
	f=\sum_{n\in \N}\pin{f}{\xi_n}S_\xi^{-1}\xi_n=\sum_{n\in \N}\pin{f}{S_\xi^{-1}\xi_n}\xi_n, \qquad \forall f\in \H
\end{equation}
hold.  Because of the formulas \eqref{rec_frame} frames are generalizations of orthonormal bases and they find many applications, for instance, in signal processing \cite{BHF,Vetterli}, time-frequency analysis \cite{Groechenig_b}, wavelets \cite{Daubechies}, acoustics \cite{BHNS}, quantum physics \cite{Gazeau}.

For some sequences $\{\xi_n\}_{n\in \N}$ only one inequality in \eqref{def_frame} is satisfied, but one still has reconstruction formula involving $\{\xi_n\}_{n\in \N}$. We say that $\{\xi_n\}_{n\in \N}$ is a {\it lower semi-frame} if the first inequality holds; conversely, we say that 
$\{\xi_n\}_{n\in \N}$ is a {\it Bessel sequence} if the second inequality holds. In the latter case, one can define again $S_\xi$ on the whole space and, with an abuse of terminology, $S_\xi$ is called again the frame operator of $\{\xi_n\}_{n\in \N}$. Two other standard (bounded) operators associated to a Bessel sequence $\{\xi_n\}_{n\in \N}$ are the {\it analysis operator} $C_\xi:\H\to \ell^2(\N)$ defined as $C_\xi f=\{\pin{f}{\xi_n}\}_{n\in \N}$ and the {\it synthesis operator} $D_\xi:\ell^2(\N)\to \H$ given by $D_\xi\{c_n\}_{n\in\N}=\sum_{n\in \N} c_n \xi_n$, where $\ell^2(\N)$ is the classical Hilbert space of square summable complex sequences indexed by $\N$. One has $C_\xi=D_\xi^*$ (then also $D_\xi=C_\xi^*$) and $S_\xi=D_\xi C_\xi$. 

The analysis, synthesis and frame operators can be defined for a generic sequence $\{\xi_n\}_{n\in \N}$ with the same actions but with appropriate domains \cite{Classif}. More precisely, 
$C_\xi:\D(C_\xi)\to \ell^2(\N)$ is defined on $\D(C_\xi)=\{f\in \H: \sum_{n\in \N}|\pin{f}{\xi_n}|^2<\infty\}$, 
$D_\xi:\D(D_\xi)\to \H$ is defined on $\D(D_\xi)=\{\{c_n\}\in \ell^2(\N):\sum_{n\in \N} c_n\xi_n \text{ is convergent in }\H\}$ and $S_\xi:\D(S_\xi)\to \H$ is defined on $\D(S_\xi)=\{\sum_{n\in \N} \pin{f}{\xi_n}\xi_n \text{ is convergent in }\H\}$. Moreover, $\xi$ is a Bessel sequence if and only if $\D(C_\xi)=\H$ if and only if $\D(D_\xi)=\ell^2(\N)$ if and only if $\D(S_\xi)=\H$.

Recently, in \cite{Corso_seq} a new operator $T_\xi$ associated to a sequence $\xi$ with $\D(C_\xi)$ dense have been introduced and studied. For many aspects, that we are going to discuss, it should be considered as the `correct' frame operator in the unbounded context (i.e. when $\xi$ is not Bessel).  
The operator $T_\xi$ is defined with the following argument. Assume that $\D(C_\xi)$ is dense. Then the non-negative sesquilinear form 
$$
\O_\xi(f,g)=\sum_{n\in \N} \pin{f}{\xi_n}\pin{\xi_n}{g}, \qquad f,g\in \D(C_\xi),
$$
is closed and densely defined \cite{Corso_seq}. By Kato's second representation theorem for sesquilinear forms \cite[Ch. VI, Th. 2.23]{Kato} there exists a positive self-adjoint operator $T_\xi$ such that $\D(T_\xi^\mez)=\D(C_\xi)$ (then $\D(T_\xi)\subseteq \D(C_\xi)$) and
$$
\O_\xi(f,g)=\pin{T_\xi f}{g}, \qquad f\in \D(T_\xi),g\in \D(C_\xi).
$$
In particular, $\D(T_\xi)$ is the subspace of all $f\in \D(C_\xi)$ such that the linear functional
\begin{equation}
	\label{dom_T_intro}
	g\to \sum_{n\in \N}\pin{f}{\xi_n}\pin{\xi_n}{g}, \qquad \forall  g\in \D(C_\xi)
\end{equation}
is bounded (with respect to the norm of $\H$).  Actually, 
$T_\xi=C_\xi^*C_\xi$. We say that $T_\xi$ is the {\it generalized frame operator} of $\xi$. This terminology is motivated by the fact that $T_\xi=S_\xi$ for Bessel sequences $\xi$. In general, if $\xi$ is a sequence such that $\D(C_\xi)$ is dense, then $S_\xi\subseteq T_\xi$ and the inclusion may be strict even for lower semi-frames (\cite[Examples 1, 2]{Corso_seq}). 

In \cite[Remark 1]{Corso_seq} a reconstruction formula for a lower semi-frame $\xi$ with $\D(C_\xi)$ dense was found inverting $T_\xi$ as follows 
\begin{equation}
	\label{rec_low}
	f=\sum_{n\in \N} \pin{f}{\xi_n}T_\xi^{-1}\xi_n, \qquad \forall f\in \D(C_\xi). 
\end{equation}
and $\{T_\xi^{-1}\xi_n\}_{n\in \N}$ is a Bessel sequence of $\H$ (as remarked in \cite{Corso_seq}, under the conditions above the range of $T_\xi$ is $\H$, so $T_\xi^{-1}\xi_n$ is well-defined for every $n\in \N$). 
Note the similarity between \eqref{rec_low} and the first formula in \eqref{rec_frame}. This is one of the reasons for which it is convenient to consider $T_\xi$ rather than $S_\xi$. Other motivations are that $T_\xi$ is self-adjoint, whereas $S_\xi$ could even be not closed (Example \ref{exm_S_notclosed} below). Finally, $T_\xi$ is unconditionally defined, i.e. it does not change if we consider a different ordering of the sequence, while $S_\xi$ may be conditionally convergent (Example \ref{ST_translates}). 

This paper solves some questions that arose after \cite{Corso_seq}. We present them below. 

\begin{quest}
	\label{quest1}
	Kato's representation theorem requires that the sesquilinear form is densely defined. How can we then extend the considerations above for a sequence $\xi$ such that $\D(C_\xi)$ is not dense? \\
	The question will be treated in Section \ref{subsecSeq_low_rec}. The idea is simply to restrict the problem to the closure $\ol{\D(C_\xi)}$ of $\D(C_\xi)$ and apply then Kato's theorem to obtain an operator $T_\xi:\D(T_\xi)\subseteq \ol{\D(C_\xi)}\to \ol{\D(C_\xi)}$. Formula \eqref{rec_low} becomes
	\begin{equation}
		\label{rec_low_gen}
		f=\sum_{n\in \N} \pin{f}{\xi_n}T_\xi^{-1}P\xi_n, \qquad \forall f\in \D(C_\xi). 
	\end{equation}
	where $P$ is the orthogonal projection onto $\ol{\D(C_\xi)}$. Again $\{T_\xi^{-1}P\xi_n\}_{n\in \N}$ is a Bessel sequence of $\H$. 
\end{quest}

\begin{quest}
	\label{quest2}
	A reconstruction formula for lower semi-frames $\xi$ was already established by Casazza and al. in \cite{Casazza_lower} even if $\D(C_\xi)$ is not dense. In particular, Proposition 3.4 of \cite{Casazza_lower} states that given a lower semi-frame $\xi$ of $\H$ there exists a Bessel sequence $\eta$ of $\H$ such that 
	\begin{equation}
		\label{rec_low_story}
		f=\sum_{n\in I} \pin{f}{\xi_n}\eta_n, \qquad \forall f\in \D(C_\xi).
	\end{equation}
	This formula resembles very much \eqref{rec_low_gen}, so we ask: is there any connection between \eqref{rec_low_gen} and  \eqref{rec_low_story}?\\
	We will prove in Section \ref{subsecSeq_low_rec} that $\{T_\xi^{-1}P\xi_n\}_{n\in \N}$ and the Bessel sequence $\eta$ constructed in \cite{Casazza_lower} are in fact equal.  
\end{quest}

\begin{quest}
	\label{quest3}
	From \eqref{dom_T_intro} we may ask if, when $\D(C_\xi)$ is dense, for every $f\in \D(T_\xi)$ the linear functional 
	$
	g\mapsto \sum_{n\in \N}\pin{f}{\xi_n}\pin{\xi_n}{g}
	$
	is defined and bounded in $\H$, not only in $\D(C_\xi)$. In other words, if we define the operator 
	\begin{align}
		\nonumber \D(W_\xi)&=\left \{ f\in \H:\sum_{n\in \N} \pin{f}{\xi_n} \xi_n \text{ is weakly convergent in } \H\right \}\\
		\label{def_W} &  W_\xi f=\sum_{n\in \N}\pin{f}{\xi_n}\xi_n \text{ in weak sense for }f\in \D(W_\xi),
	\end{align}
	is it true that $T_\xi=W_\xi$? \\
	The answer is negative and it will be proven with a counterexample in Section \ref{sec_TW}. In conclusion, the operator $T_\xi$ may neither coincide with $S_\xi$ nor with $W_\xi$.
\end{quest}

The rest of the paper concerns applications to the concrete {\it sequences of integer translates} $\mathcal{T}(\varphi,a):=\{\varphi_n\}_{n\in \Z}$ of a function $\varphi\in L^2(\R)$, i.e. $\varphi_n(x):=\varphi(x-na)$ for some $a>0$ and every $n\in \Z,x\in \R$. Sequences of translates are the elementary blocks of more complex sequences like Gabor systems \cite{Chris,Groechenig_b} and wavelets \cite{Daubechies}, and they are related also to the sampling theory \cite{Higgins,Vetterli}. 

First of all, we determine the generalized frame operator $T_\varphi$ of $\mathcal{T}(\varphi,a)$ in Theorem \ref{th_shift}. The expression of $T_\varphi$ is known when  $\mathcal{T}(\varphi,a)$ is a Bessel sequence and it is called {\it Walnut representation} (see for instance \cite[Theorem 10.2.1]{Chris} and \cite{Janssen}). If $\mathcal{T}(\varphi,a)$ is not a Bessel sequence, then we can again give a Walnut representation of $T_\varphi$, but of course its domain is a proper subspace of $L^2(\R)$. 

In the literature many descriptions of sequences $\mathcal{T}(\varphi,a)$ have been given in terms of the Gramian function $p_\varphi(\gamma):=\frac{1}{a}\sum_{n\in \Z}\left|\wh \varphi \left(\frac{\gamma+n}{a}\right) \right|^2$  for $\gamma\in [0,1]$. We cite some examples from \cite{Chris,heil,hsww,Ron_Shen}: $\mathcal{T}(\varphi,a)$ is a Bessel sequence if and only if $p_\varphi$ is bounded a.e. in $[0,1]$; $\mathcal{T}(\varphi,a)$ is a frame for its closed span if and only if $p_\varphi$ is bounded above and away from zero a.e. in the set $Z_\varphi:=\{\gamma \in [0,1): p_\varphi(\gamma)\neq 0\}$, i.e.\ there exist $A,B>0$ such that $A\leq p_\varphi(\gamma)\leq B$ for all $\gamma \in Z_\varphi$. Following this line, we prove in Theorem \ref{pro_lower_transl} that $\mathcal{T}(\varphi,a)$ is a lower semi-frame for its closed span if and only if $p_\varphi$ is bounded away from zero a.e. in $Z_\varphi$ and we find explicitly the dual in \eqref{rec_low} which is a sequence of translates, too. 

Some statements and examples involve {\it weighted exponentials sequences} $\mathcal{E}(g,b):=\{g_n\}_{n\in \Z}$, with $g\in L^2(0,1)$ and $b>0$, defined by $g_n(x)=g(x)e^{2 \pi i n b x}$. 
By using the Fourier transform,  $\mathcal{E}(g,b)$ can be actually considered as sequences of integer translates.

\section{Preliminaries}

Given an operator $T:\D(T)\subseteq \H_1\to \H_2$ between two Hilbert spaces we indicate by $N(T)$, $\D(T)$, $R(T)$, $\rho(T)$ and $\sigma(T)$ the kernel, the domain, the range, the resolvent set and the spectrum of $T$, respectively. When $T$ is densely defined (i.e., $D(T)$ is dense) we denote by $T^*$ its adjoint. We write $T_1\subseteq T_2$ for the operator extension. 

The symbols $\ell^2(\N),\ell^2(\Z)$ stand for the usual Hilbert spaces of complex sequences $\{c_n\}$ satisfying $\sum_{n} |c_n|^2 <\infty$. We will work also with the classical spaces $L^2(\R)$ and $L^2(0,1)$. Throughout the paper $\H$ indicates a separable Hilbert space with norm $\nor$ and inner product $\pint$. The closure and the orthogonal complement of a subspace $\mathcal{M}\subseteq \H$ are denoted by $\ol{\mathcal{M}}$ and $\mathcal{M}^\perp$, respectively. 

We will need the following notions besides those of frames. For a sequence $\xi=\{\xi_n\}_{n\in \N}$ we write span$(\{\xi_n\}_{n\in \N})$ and $\spann(\{\xi_n\}_{n\in \N})$ for its linear span and closed linear span, respectively. A sequence $\xi$ is {\it complete} if its span$(\{\xi_n\}_{n\in \N})$ is dense in $\H$, or equivalently, if the only $f\in \H$ such that $\pin{f}{\xi_n}=0$ for all $n\in \N$ is $f=0$. Note that a (lower semi-) frame is complete. 

We call $\xi$ {\it minimal} if $\xi_k\notin \spann(\{\xi_n\}_{n\neq k})$ for every $k\in \N$ or, equivalently, it admits a biorthogonal sequence in the sense of the next definition. 	Two sequences $\xi=\{\xi_n\}_{n\in \N}$ and $\eta=\{\eta_n\}_{n\in \N}$ are said to be {\it biorthogonal} if $\pin{\xi_n}{\eta_m}=\delta_{n,m}$, where $\delta_{n,m}$ is the Kronecker symbol.

A sequence $\xi$ is said to be a {\it Schauder basis} of $\H$ is for every $f\in \H$ there exists a unique complex sequence $\{a_n\}_{n\in \N}$ such that 
\begin{equation}
	\label{def_Schauder}
	f=\sum_{n=1}^\infty a_n\xi_n. 
\end{equation}
In addition, we say that $\xi$ is an {\it unconditional} Schauder basis if the series \eqref{def_Schauder} converges unconditionally (i.e. with respect to any ordering of elements) for each $f\in \H$. A Schauder basis is complete and minimal.  Moreover, the coefficients $\{a_n\}_{n\in \N}$ in \eqref{def_Schauder} are given by $\{\pin{f}{\eta_n}\}_{n\in \N}$, where $\{\eta_n\}_{n\in \N}$ is the (unique) sequence biorthogonal to $\xi$, that is also a Schauder basis of $\H$.

A {\it Riesz basis} $\xi$  is a complete sequence satisfying for some $A,B>0$
$$
A \sum_{n\in \N} |c_n|^2 \leq \left \| \sum_{n\in \N} c_n \xi_n \right \|^2 \leq B\sum_{n\in \N} |c_n|^2, \qquad\forall \{c_n\}\in \ell^2(\N).
$$
A Riesz basis is in particular a Schauder basis and a frame. 

The analysis, synthesis and frame operators $C_\xi$, $D_\xi$ and $S_\xi$ for a generic sequence have been introduced above. We write here their main properties.

\begin{pro}[{\cite[Proposition 3.3]{Classif}}]
	\label{pro_oper_CD}
	Let $\xi$ be a sequence of $\H$.  The following statements hold.	
	\begin{enumerate}
		\item $C_\xi=D_\xi^*$ and $C_\xi$ is closed.
		\item If $\D(C_\xi)$ is dense, then $D_\xi \subseteq C_\xi^*$.
		\item $D_\xi$ is closable if and only if $\D(C_\xi)$ is dense. 
		\item $D_\xi$ is closed if and only if $\D(C_\xi)$ is dense and $D_\xi=C_\xi^*$.
		\item $S_\xi=D_\xi C_\xi$.
	\end{enumerate}
\end{pro}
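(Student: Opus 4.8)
The plan is to establish (i) first, since (ii)--(iv) are then formal consequences of the standard theory of adjoints and closability, and to treat (v) separately because it conceals one genuine subtlety.

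First I would record that $D_\xi$ is always densely defined: its domain $\D(D_\xi)$ contains every finitely supported sequence, and these are dense in $\ell^2(\N)$. Hence $D_\xi^*$ is well defined regardless of any hypothesis on $\xi$. To prove (i), I would compute $D_\xi^*$ directly from its definition. For $h\in \D(D_\xi^*)$ the functional $c\mapsto \pin{D_\xi c}{h}$ is bounded on $\D(D_\xi)$; testing it against finitely supported sequences $c$ pins down the representing vector to be $\{\pin{h}{\xi_n}\}_{n\in\N}$, which forces this sequence to lie in $\ell^2(\N)$, i.e.\ $h\in\D(C_\xi)$ and $D_\xi^*h=C_\xi h$. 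Conversely, for $h\in\D(C_\xi)$ I would expand $\pin{D_\xi c}{h}=\sum_n c_n\pin{\xi_n}{h}$ for $c\in\D(D_\xi)$ (legitimate because $\sum_n c_n\xi_n$ converges and the inner product is continuous) and bound it by Cauchy--Schwarz in $\ell^2(\N)$, giving $h\in\D(D_\xi^*)$. Thus $C_\xi=D_\xi^*$, and $C_\xi$ is closed because every adjoint is closed.

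With (i) in hand, (ii)--(iv) follow from general operator theory applied to the densely defined $D_\xi$ with $D_\xi^*=C_\xi$. A densely defined operator is closable if and only if its adjoint is densely defined, which is exactly the statement that $D_\xi$ is closable if and only if $\D(C_\xi)$ is dense, giving (iii). When $\D(C_\xi)$ is dense, $C_\xi=D_\xi^*$ is densely defined, so $D_\xi^{**}=(D_\xi^*)^*=C_\xi^*$ exists and $D_\xi\sub D_\xi^{**}=C_\xi^*$, which is (ii). For (iv), if $D_\xi$ is closed then it is closable, so $\D(C_\xi)$ is dense by (iii) and $D_\xi=\ol{D_\xi}=D_\xi^{**}=C_\xi^*$; conversely $C_\xi^*$ is an adjoint, hence closed.

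Finally, for (v) I would compare the domains of $D_\xi C_\xi$ and $S_\xi$. By construction $f\in\D(D_\xi C_\xi)$ precisely when $f\in\D(C_\xi)$ and $\sum_n\pin{f}{\xi_n}\xi_n$ converges in $\H$, and then $D_\xi C_\xi f=S_\xi f$; so the only thing to check is that the convergence condition defining $\D(S_\xi)$ already entails $f\in\D(C_\xi)$. This is the one step that is not purely formal: if the partial sums of $\sum_n\pin{f}{\xi_n}\xi_n$ converge in $\H$, I would pair them with $f$ and use continuity of the inner product to obtain $\sum_n|\pin{f}{\xi_n}|^2=\pin{S_\xi f}{f}<\infty$, so that $f\in\D(C_\xi)$ automatically. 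Hence $\D(S_\xi)=\D(D_\xi C_\xi)$ and the two actions coincide, proving $S_\xi=D_\xi C_\xi$. The main obstacles are the careful determination of $D_\xi^*$ over the whole domain $\D(D_\xi)$ in (i) and this automatic square-summability argument in (v); everything else is bookkeeping with adjoints.
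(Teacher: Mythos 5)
Your proof is correct. The paper states this proposition only as a citation of \cite[Proposition 3.3]{Classif} and supplies no proof of its own, so there is nothing internal to compare against; your argument is the standard one, and you correctly isolated the one non-formal point, namely that convergence of $\sum_{n}\pin{f}{\xi_n}\xi_n$ already forces $\{\pin{f}{\xi_n}\}_{n\in\N}\in\ell^2(\N)$ (hence $\D(S_\xi)\subseteq\D(C_\xi)$) by pairing the partial sums with $f$ and using monotone convergence of $\sum_{n=1}^{N}|\pin{f}{\xi_n}|^2$.
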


We will make use of the {\it pseudo-inverse} $T^\dagger$. Here $T:\D(T)\subseteq \H_1\to \H_2$ is a closed and densely defined operator between two Hilbert spaces $\H_1,\H_2$ with $R(T)$ closed. We recall (\cite[Lemma 1.1, Corollary 1.2]{Pseudo}) that $T^\dagger:\H_2\to \H_1$ is the unique bounded operator such that 
$$
N(T^\dagger)=R(T)^\perp, \quad \ol{R(T^\dagger)}=N(T)^\perp, \quad     TT^\dagger f=f, \quad \forall f\in R(T),\quad T^\dagger T g=g, \quad \forall g\in N(T)^\perp.
$$

Finally, we recall a definition related to sesquilinear form. 
Given a dense subspace $\D\subseteq \H$ and a sesquilinear form $\Omega:\D\times \D\to \C$ (i.e. a map which is linear in the first component and anti-linear in the second one), the operator $T$ with 
\begin{equation}
	\label{op_ass}
	\D(T)=\{f\in \D: \exists h\in \H, \O(f,g)=\pin{h}{g} \text{ for all } g\in \D\}
\end{equation}
and $Tf=h$, where $h$ is as in \eqref{op_ass}, is called the {\it operator associated} to $\Omega$. The density of $\D$ ensures that $T$ is well-defined and it is the greatest operator satisfying the representation 
$$
\O(f,g)=\pin{T f}{g}, \qquad \forall f\in \D(T), g\in \D.
$$

\section{The generalized frame operator}
\label{secSeq_1seq}

\begin{defin}
	Let $\xi=\{\xi_n\}_{n\in \N}$ be a sequence of $\H$. The {\it sesquilinear form associated to $\xi$} is 
	$$
	\t_\xi(f,g):=\sum_{n\in \N} \pin{f}{\xi_n}\pin{\xi_n}{g}, \qquad \forall f,g\in\D(C_\xi).
	$$
\end{defin}
\no Clearly, $\t_\xi$ is nonnegative and it is defined on the largest possible domain of the type $\D\times \D$ with $\D\subseteq \H$. Moreover
\begin{equation}
	\label{O_xi_max}
	\t_\xi(f,g)=\pin{C_\xi f}{C_\xi g}_2, \qquad\forall f,g\in \D(C_\xi).
\end{equation}
It follows by the Cauchy-Schwarz inequality that $\t_\xi$ is  unconditionally convergent, i.e. does not depend neither on the ordering of the sequence, nor the particular choice of the index set $\N$ (in other words, $\Omega_\xi$ does not change if a different index set is used for the elements of the sequence).

Since $C_\xi$ is a closed operator, $\t_\xi$ is also a closed nonnegative form. 
We can think to $\t_\xi$ as a form in the Hilbert space $\H_\xi:=\ol{\D(C_\xi)}[\nor]$ supported in $\ol{\D(C_\xi)}$ with the topology induced by $\H$. 
Under this assumption, $\t_\xi$ is densely defined and then we can talk about the operator $T_\xi$ associated to $\t_\xi$. In particular, $T_\xi:\D(T_\xi)\subseteq \H_\xi\to \H_\xi$ is an operator on $\H_\xi$ defined as follows: 
$\D(T_\xi)$ is the subspace consisting of all $f\in \D(C_\xi)$ such that the linear functional
\begin{equation*}
	g\to \sum_{n\in \N}\pin{f}{\xi_n}\pin{\xi_n}{g}, \qquad g\in \D(C_\xi)
\end{equation*}
is bounded (with respect to the norm of $\H$)
and for $f\in \D(T_\xi)$ one has $T_\xi f=h$ where $h$ is the unique element of $\H_\xi$ satisfying $\sum_{n\in \N} \pin{f}{\xi_n}\pin{\xi_n}{g}=\pin{h}{g}$ for all $g\in \D(C_\xi)$. Since $\t_\xi$ is unconditionally convergent, the operator $T_\xi$ neither depends on the particular ordering of the sequence $\xi$ nor the choice of the index set $\N$. We say then that $T_\xi$ is {\it unconditionally defined}.

By Kato's second representation theorem \cite[Ch. VI, Th. 2.23]{Kato}, $T_\xi$ is positive and self-adjoint in the space $\H_\xi$,  we have also that $\D(T_\xi^\mez)=\D(C_\xi)$ and
\begin{align*}
	\t_\xi(f,g)=\pin{T_\xi^\mez f}{T_\xi^\mez g}, \qquad \forall f, g\in \D(C_\xi),
\end{align*}
where $T_\xi^\mez:\D(T_\xi^\mez)\subseteq \H_\xi\to \H_\xi$ is the positive square-root of $T_\xi$. 
If we think $C_\xi$ as operator $C_\xi:\D(C_\xi)\subseteq \H_\xi \to \ell^2(\N)$, then we can consider its adjoint, denoted by $C_\xi^\times$, $C^\times_\xi:\D(C^\times_\xi)\subseteq \ell^2(\N)\to \H_\xi$, which is densely defined since $C_\xi$ is closed\footnote{We use the symbol $C^\times_\xi$ to denote the adjoint of $C_\xi$ as operator $C_\xi:\D(C_\xi)\subseteq \H_\xi \to \ell^2(\N)$; if $C_\xi$ is densely defined, then $\H_\xi=\H$ and we prefer to write $C^*_\xi$ instead of $C^\times_\xi$.}. From  (\ref{O_xi_max}) one can easily see that \begin{equation}
	\label{T=C^*C}
	T_\xi={C_\xi^\times}C_\xi=|C_\xi|^2, 
\end{equation} the square of the modulus of $C_\xi$. 

If $\xi$ is a Bessel sequence, then  $T_\xi=S_\xi$, of course.  
If $R(S_\xi)\subseteq \H_\xi$ (this is certainly true if $\D(C_\xi)$ is dense), then $S_\xi\subseteq T_\xi$.  Indeed for any $f\in \D(S_\xi)$ we have that 
$\sum_{n\in \N} \pin{f}{\xi_n}\pin{\xi_n}{g} =\pin{S_\xi f}{g}$ is bounded for $g\in \D(C_\xi)$. However, in \cite[Example 1]{Corso_seq} it was proved that $S_\xi$ may be strictly smaller than $T_\xi$. For this reason we give the following definition:
\begin{defin}
	Let $\xi=\{\xi_n\}_{n\in \N}$ be a sequence of $\H$. The operator $T_\xi$ constructed as above is called the {\it generalized frame operator} of $\xi$. 
\end{defin}

Throughout the rest of the paper we will give a special attention to lower semi-frames, starting with the following characterization.

\begin{pro}[{\cite[Proposition 3]{Corso_seq}}]
	\label{car_seq_T_xi}
	
	Let $\xi$ be a sequence of $\H$ and $A>0$. The following statements are equivalent.
	\begin{enumerate}
		\item 
		$\xi$ is a lower semi-frame of $\H$ with lower bound $A$;
		
		\item 
		$T_\xi$ is bounded from below by $A$, i.e., 
		$$
		\|T_\xi f\|\geq A\|f\|,\;\;  \forall f\in \D(T_\xi);
		$$
		
		\item 
		$T_\xi$ is invertible with range $\H_\xi$, $T_\xi^{-1}:\H_\xi\to \D(T_\xi)$ is bounded with respect to the norm of $\H_\xi$ and $\|T_\xi^{-1}\|\leq A$.
	\end{enumerate}
\end{pro}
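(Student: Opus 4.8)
The plan is to reduce all three conditions to the single spectral statement $\sigma(T_\xi)\sub[A,\infty)$ and to read off the equivalences from the functional calculus for $T_\xi$. The ingredients are already in place in the excerpt: in $\H_\xi$ the operator $T_\xi$ is positive and self-adjoint, $T_\xi=|C_\xi|^2$, and $\D(T_\xi^\mez)=\D(C_\xi)$. Combining $\t_\xi(f,f)=\sum_{n\in\N}|\pin{f}{\xi_n}|^2$ with the representation $\t_\xi(f,f)=\n{T_\xi^\mez f}^2$ yields the crucial identity $\n{T_\xi^\mez f}^2=\sum_{n\in\N}|\pin{f}{\xi_n}|^2$ valid for every $f\in\D(C_\xi)$. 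Everything else is spectral bookkeeping around this identity.

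The first step is to recast (i) as a form bound on $\H_\xi$. The lower semi-frame inequality $A\n{f}^2\le\sum_{n\in\N}|\pin{f}{\xi_n}|^2$ is demanded for all $f\in\H$, but whenever $f\notin\D(C_\xi)$ the right-hand side equals $+\infty$ by the very definition of $\D(C_\xi)$, so it holds vacuously. Hence (i) is equivalent to its restriction to $f\in\D(C_\xi)$, which by the identity above reads $\n{T_\xi^\mez f}^2\ge A\n{f}^2$ for all $f\in\D(T_\xi^\mez)$, i.e.\ the form inequality $T_\xi\ge A$ on $\H_\xi$. By the spectral theorem in its quadratic-form version this is exactly $\sigma(T_\xi)\sub[A,\infty)$: if no spectrum lies in $[0,A)$ the estimate is immediate, and otherwise a vector in the range of the spectral projection $E([0,A-\varepsilon])$ violates it.

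It then remains to identify (ii) and (iii) with the same spectral condition. Since $T_\xi\ge 0$, the bound $\n{T_\xi f}\ge A\n{f}$ in (ii) is equivalent to $T_\xi^2\ge A^2$, and by positivity of $T_\xi$ this is again $\sigma(T_\xi)\sub[A,\infty)$. For (iii), the condition $\sigma(T_\xi)\sub[A,\infty)$ with $A>0$ forces $0\notin\sigma(T_\xi)$, so $T_\xi$ is boundedly invertible on $\H_\xi$; spectral mapping gives $\sigma(T_\xi^{-1})=\{\lambda^{-1}:\lambda\in\sigma(T_\xi)\}\sub(0,A^{-1}]$, whence $T_\xi^{-1}\in\B(\H_\xi)$ with norm equal to the reciprocal of the lower bound, and the converse is the same computation run backwards.

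I expect the only genuinely delicate point to be the reduction carried out in the second paragraph. Because $\D(C_\xi)$ need not be dense in $\H$ for a lower semi-frame — precisely the phenomenon that forces $T_\xi$ to be viewed as an operator on $\H_\xi$ rather than on $\H$ — one must check that the semi-frame inequality, a statement about all of $\H$, really does collapse to a statement about $T_\xi$ on $\H_\xi$ alone. The observation legitimising this is that every vector outside $\D(C_\xi)$, and in particular every nonzero element of $\H_\xi^\perp$ (which cannot lie in $\D(C_\xi)$, since $\D(C_\xi)\sub\H_\xi$), satisfies the inequality vacuously; once this is settled the remaining equivalences are routine spectral calculus.
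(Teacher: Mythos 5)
The paper does not actually prove this proposition; it is imported verbatim from \cite{Corso_seq} (Proposition 3 there), so there is no internal proof to compare against. Judged on its own, your argument is correct and is essentially the canonical route: the identity $\n{T_\xi^\mez f}^2=\t_\xi(f,f)=\sum_{n}|\pin{f}{\xi_n}|^2$ on $\D(T_\xi^\mez)=\D(C_\xi)$, supplied by Kato's second representation theorem, together with the observation that the lower inequality is vacuous off $\D(C_\xi)$ (where the sum is $+\infty$), reduces (i) to the form bound $T_\xi\geq A$ in $\H_\xi$; the equivalences with (ii) and (iii) are then standard spectral calculus for a positive self-adjoint operator (and this route also delivers surjectivity in (iii) for free, since for a self-adjoint operator being bounded below means $0\notin\sigma(T_\xi)$). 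You correctly identify and dispose of the only delicate point, namely that the semi-frame inequality is a statement on all of $\H$ while $T_\xi$ lives on $\H_\xi$. One discrepancy is worth flagging: your computation yields $\|T_\xi^{-1}\|\leq A^{-1}$, not $\leq A$ as in the displayed statement. Your version is the correct one --- with $\leq A$ read literally, (iii) would fail for an orthonormal basis rescaled by $1/2$, where the optimal lower bound is $A=1/4$ while $\|T_\xi^{-1}\|=4$ --- so the $A$ in item (iii) must be a misprint for $A^{-1}$, and your proof establishes the intended statement.
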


\section{Lower semi-frames and reconstruction formulas}
\label{subsecSeq_low_rec}

In this section we follow the idea in \cite[Remark 1]{Corso_seq} to obtain the reconstruction formula \eqref{rec_low} starting from a lower semi-frame. However, our aim is to extend \eqref{rec_low} to the generic case, i.e. not assuming the density of $\D(C_\xi)$, giving the answer to Question \ref{quest1}.

Let $\xi$ be a lower semi-frame of $\H$. Thus  $T_\xi:\D(T_\xi)\subseteq \H_\xi \to \H_\xi$ is a bijection and $T_\xi^{-1}:\H_\xi \to \H_\xi$ is bounded (also its positive square root $T_\xi^{-\mez}:\H_\xi \to \H_\xi$ is bounded). In what follows we will need the projection $P$ of $\H$ onto $\H_\xi$. 
Let $h\in \H$, then for all $g\in \D(C_\xi)\subseteq \H_\xi$  
\begin{align}
	\label{weak_rec_1seq(a)}
	\pin{h}{g}&=\pin{Ph}{g}=\pin{T_\xi T_\xi ^{-1}Ph}{g}=\sum_{n\in \N} \pin{T_\xi^{-1}Ph}{\xi_n}\pin{\xi_n}{g}=\sum_{n\in \N} \pin{T_\xi^{-1}Ph}{P\xi_n}\pin{\xi_n}{g} \nonumber\\
	&=\sum_{n\in \N} \pin{Ph}{T_\xi^{-1}P\xi_n}\pin{\xi_n}{g}=\sum_{n\in \N} \pin{h}{T_\xi^{-1}P\xi_n}\pin{\xi_n}{g}.
\end{align}
Note that $\{T_\xi^{-1}P\xi_n\}_{n\in \N}$ is a sequence in $\H_\xi$, but we consider it also as a sequence of $\H$. Moreover, $\{T_\xi^{-1}P\xi_n\}_{n\in \N}$
is a Bessel sequence of $\H$. Indeed, for every $f\in \H$, we have that $T_\xi^{-1}Pf\in \D(C_\xi)$ and
\begin{equation}
	\label{dual_Bess}
	\sum_{n\in \N} |\pin{f}{T_\xi^{-1}P\xi_n}|^2=
	\sum_{n\in \N} |\pin{T_\xi^{-1} Pf}{\xi_n}|^2= \|T_\xi^{\mez} T_\xi^{-1}Pf\|^2
	\leq
	\|T_\xi^{-\mez}\|^2\|f\|^2. 
\end{equation}
Hence, from  \eqref{weak_rec_1seq(a)} we get the following reconstruction in a strong sense.
\begin{theo}
	Let $\xi$ be a lower semi-frame of $\H$ with generalized frame operator $T_\xi$ and let $P$ be the projection of $\H$ onto $\H_\xi$. Then $\{T_\xi^{-1}P\xi_n\}_{n\in \N}$
	is a Bessel sequence of $\H$ and 
	\begin{equation}
		\label{weak_rec_1seq(b)}
		g=\sum_{n\in \N} \pin{g}{\xi_n} T_\xi^{-1}P\xi_n, \qquad \forall g\in \D(C_\xi),
	\end{equation}
	with unconditional convergence. 
\end{theo}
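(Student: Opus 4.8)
The statement splits into two parts, and the essential preliminary work has already been carried out in the lines preceding the theorem: the Bessel property of $\{T_\xi^{-1}P\xi_n\}_{n\in\N}$ is exactly the content of the estimate \eqref{dual_Bess}, while \eqref{weak_rec_1seq(a)} delivers the reconstruction in the \emph{weak} sense. The plan is therefore to promote this weak identity to norm convergence, and then to observe that the convergence is automatically unconditional.

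Set $\eta_n:=T_\xi^{-1}P\xi_n$. First I would fix $g\in\D(C_\xi)$ and note that, by the very definition of $\D(C_\xi)$, the coefficients $\{\pin{g}{\xi_n}\}_{n\in\N}$ form an element of $\ell^2(\N)$. Since $\{\eta_n\}_{n\in\N}$ is Bessel, its synthesis operator is bounded and defined on all of $\ell^2(\N)$; applied to the sequence $\{\pin{g}{\xi_n}\}_{n\in\N}$ it shows that the series $\tilde g:=\sum_{n\in\N}\pin{g}{\xi_n}\,\eta_n$ converges in the norm of $\H$, and in fact unconditionally, because synthesis of a Bessel sequence against any $\ell^2$-coefficient sequence converges unconditionally.

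It remains only to identify $\tilde g$ with $g$, and here I would use the weak formula \eqref{weak_rec_1seq(a)}. For an arbitrary $h\in\H$, continuity and anti-linearity of the inner product in its second slot give
$$
\pin{h}{\tilde g}=\sum_{n\in\N}\overline{\pin{g}{\xi_n}}\,\pin{h}{\eta_n}=\sum_{n\in\N}\pin{h}{\eta_n}\pin{\xi_n}{g}=\pin{h}{g},
$$
the last equality being precisely \eqref{weak_rec_1seq(a)}. Since $h$ was arbitrary, $\tilde g=g$, which is \eqref{weak_rec_1seq(b)}.

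The one genuinely delicate point is the passage from weak to strong convergence: on its own, \eqref{weak_rec_1seq(a)} only exhibits $g$ as a weak limit of the partial sums, and weak convergence does not in general imply norm convergence. The Bessel property is what bridges the gap, since it manufactures a bona fide norm limit $\tilde g$ of the very same series, and the weak identity then forces $\tilde g=g$. Unconditionality requires no separate argument, being inherited directly from the Bessel property.
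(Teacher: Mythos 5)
Your proposal is correct and follows essentially the same route as the paper: the paper likewise derives the theorem directly from the weak identity \eqref{weak_rec_1seq(a)} together with the Bessel estimate \eqref{dual_Bess}, with the Bessel synthesis operator supplying the unconditional norm convergence. You have merely made explicit the standard bridging step (norm limit exists by Besselness, then is identified with $g$ via the weak formula), which the paper leaves implicit in the phrase ``Hence, from \eqref{weak_rec_1seq(a)} we get the following reconstruction in a strong sense.''
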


If $\D(C_\xi)$ is dense, then $\H_\xi=\H$ so $T_\xi:\D(T_\xi)\to \H$ is bijective and the sequence $\{T_\xi^{-1}P\xi_n\}_{n\in \N}$ is simply $\{T_\xi^{-1}\xi_n\}_{n\in \N}$ 
which recalls the expression of the canonical dual of a frame (where $S_\xi$ is now replaced with $T_\xi$). For this reason we extend the terminology and call  $\{T_\xi^{-1}P\xi_n\}_{n\in \N}$ the {\it canonical dual} of the lower semi-frame $\xi$. 

Actually, a formula like \eqref{weak_rec_1seq(b)} involving a lower semi-frame and a Bessel sequence was proved in \cite[Proposition 3.4]{Casazza_lower}: given a lower semi-frame $\xi$ of $\H$ there exists a Bessel sequence $\eta=\{\eta_n\}_{n\in \N}$ of $\H$ such that 
\begin{equation*}
	g=\sum_{n\in \N} \pin{g}{\xi_n} \eta_n, \qquad \forall g\in \D(C_\xi).
\end{equation*}
The proof in \cite[Proposition 3.4]{Casazza_lower} is constructive and, more precisely, $\eta$ is given as follows. The operator $C_\xi^{-1}:R(C_\xi)\to \H_\xi$ is well-defined and bounded; therefore it can be extended to a bounded operator $Y:\ell^2(\N)\to \H_\xi$ by putting $Y\{c_n\}=0$ for $\{c_n\}\in R(C_\xi)^\perp$, i.e. $Y=C_\xi^{-1} \oplus 0$. The sequence $\eta$ is now defined as $\eta_n:=Ye_n$ for all $n\in \N$, where $\{e_n\}_{n\in \N}$ is the canonical orthonormal basis of $\ell^2(\N)$. 

Thus we come to Question \ref{quest2}: we prove in particular that the Bessel sequences $\{\eta_n\}_{n\in \N}$ and $\{T_\xi^{-1}P\xi_n\}_{n\in \N}$ coincide. 

\begin{pro}
	Let $\xi$ be a lower semi-frame of $\H$ with generalized frame operator $T_\xi$, and $\eta$ as before. Then $\eta_n=T_\xi^{-1}P\xi_n$ for every $n\in \N$.
\end{pro}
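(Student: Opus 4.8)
The plan is to show that both Bessel sequences arise by applying the pseudo-inverse $C_\xi^\dagger$ of $C_\xi$ to the canonical basis $\{e_n\}_{n\in\N}$ of $\ell^2(\N)$; that is, I would establish
$$
\eta_n = C_\xi^\dagger e_n = T_\xi^{-1}P\xi_n, \qquad n\in\N.
$$
First I observe that $C_\xi^\dagger$ is available. Viewing $C_\xi$ as a closed (Proposition \ref{pro_oper_CD}) densely defined operator $\D(C_\xi)\subseteq\H_\xi\to\ell^2(\N)$, the lower semi-frame bound gives $\|C_\xi f\|_2^2=\sum_{n\in\N}|\pin{f}{\xi_n}|^2\ge A\|f\|^2$ for $f\in\D(C_\xi)$, so $C_\xi$ is bounded below, hence injective with closed range, and the pseudo-inverse recalled in the Preliminaries applies.

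For the first equality I would verify that the operator $Y$ defining $\eta$ satisfies the three conditions characterizing $C_\xi^\dagger$. Since $Y$ agrees with the injective map $C_\xi^{-1}$ on $R(C_\xi)$ and vanishes on $R(C_\xi)^\perp$, one has $N(Y)=R(C_\xi)^\perp$; moreover $R(Y)=R(C_\xi^{-1})=\D(C_\xi)$ is dense in $\H_\xi=N(C_\xi)^\perp$, so $\ol{R(Y)}=N(C_\xi)^\perp$; and $C_\xi Yc=C_\xi C_\xi^{-1}c=c$ for every $c\in R(C_\xi)$. By uniqueness of the pseudo-inverse, $Y=C_\xi^\dagger$, whence $\eta_n=Ye_n=C_\xi^\dagger e_n$.

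For the second equality I would first compute $C_\xi^\times e_n$, where $C_\xi^\times$ is the adjoint of $C_\xi:\H_\xi\to\ell^2(\N)$. For $f\in\D(C_\xi)$ we have $\pin{C_\xi f}{e_n}_2=\pin{f}{\xi_n}=\pin{f}{P\xi_n}$ (using $Pf=f$), so the functional $f\mapsto\pin{C_\xi f}{e_n}_2$ is bounded on $\D(C_\xi)$ and $C_\xi^\times e_n=P\xi_n$. It then remains to show $T_\xi^{-1}C_\xi^\times c=C_\xi^\dagger c$ for every $c\in\D(C_\xi^\times)$, specialized to $c=e_n$. Writing $c=c_0+c_1$ with $c_0\in R(C_\xi)$ and $c_1\in R(C_\xi)^\perp=N(C_\xi^\times)=N(C_\xi^\dagger)$, both $C_\xi^\times$ and $C_\xi^\dagger$ ignore $c_1$; writing $c_0=C_\xi u$ with $u\in\D(C_\xi)$ (possible and unique since $C_\xi$ is a bijection onto $R(C_\xi)$), we get $T_\xi^{-1}C_\xi^\times c_0=T_\xi^{-1}C_\xi^\times C_\xi u=T_\xi^{-1}T_\xi u=u$, while $C_\xi C_\xi^\dagger c_0=c_0=C_\xi u$ forces $C_\xi^\dagger c_0=u$ by injectivity. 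Hence $T_\xi^{-1}P\xi_n=T_\xi^{-1}C_\xi^\times e_n=C_\xi^\dagger e_n$, which combined with the first equality yields $\eta_n=T_\xi^{-1}P\xi_n$.

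The main obstacle is the bookkeeping with unboundedness and with the two inner products involved. Since $C_\xi^\times$ is genuinely unbounded unless $\xi$ is Bessel, the operator $T_\xi^{-1}C_\xi^\times$ is only a restriction of the bounded operator $C_\xi^\dagger$, and one must check that $e_n$ lies in $\D(C_\xi^\times)$ before evaluating — which is exactly what the boundedness of $f\mapsto\pin{f}{\xi_n}$ guarantees. A second delicate point is that the inner product of $\H_\xi$ is the restriction of that of $\H$, so that the identity $C_\xi^\times e_n=P\xi_n$ lands in $\H_\xi$ through the projection $P$; placing $P$ correctly is precisely what makes the two reconstruction formulas coincide.
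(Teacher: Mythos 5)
Your proof is correct and follows essentially the same route as the paper: both hinge on the computation $C_\xi^\times e_n = P\xi_n$ and on inverting the factorization $T_\xi = C_\xi^\times C_\xi$ via pseudo-inverse identities, arriving at $\eta_n = Y e_n$ with $Y$ the pseudo-inverse of $C_\xi$. The only difference is bookkeeping: the paper writes $T_\xi^{-1} = YX$ with $X$ the pseudo-inverse of $C_\xi^\times$, whereas you prove $T_\xi^{-1}C_\xi^\times c = C_\xi^\dagger c$ directly by splitting $c$ along $R(C_\xi)\oplus R(C_\xi)^\perp$ --- in fact your version is slightly more careful at the step where the paper asserts $XC_\xi^\times c = c$ for all $c\in \D(C_\xi^\times)$ (strictly one only obtains the projection of $c$ onto $\ol{R(C_\xi)}$, which is harmless since $Y$ annihilates $R(C_\xi)^\perp$).
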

\begin{proof}
	From $T_\xi=C_\xi^\times C_\xi$ we have $T_\xi^{-1}=YX$ where $Y$ is defined as above and $X:\H_\xi \to \ell^2(\N)$ is the bounded pseudo-inverse of $C_\xi^\times$. 
	If $\{e_n\}_{n\in \N}$ is the canonical orthonormal basis of $\ell^2(\N)$, then 
	$\pin{C_\xi f}{e_n}=\pin{f}{\xi_n}=\pin{f}{P\xi_n}$ for $f\in \D(C_\xi)$. This means that $e_n\in \D(C_\xi^\times)$ and $C_\xi^\times e_n=P\xi_n$ for all $n\in \N$. By definition of pseudo-inverse, $XC_\xi^\times\{c_n\}=\{c_n\}$ for all $\{c_n\}\in \D(C_\xi^\times)$.
	In particular, $e_n=XC_\xi^\times e_n=XP \xi_n$ and, as consequence, $T_\xi^{-1}P\xi_n=Ye_n=\eta_n$ for all $n\in \N$. 
\end{proof}

\begin{rem}
	\label{rem_cntrex_lower}
	In general, \eqref{weak_rec_1seq(b)} does not always extend to all $g\in \H$ as it was shown first in \cite[Theorem 3.5]{Casazza_lower} and then also in \cite[Example 4.1]{Stoeva_lower_p}.  We give more details about these results. 
	\begin{enumerate}
		\item The counterexample in \cite[Theorem 3.5]{Casazza_lower} is constructed as a sequence $\xi$ in a direct sum of Hilbert spaces of increasing finite dimensions $\H=\bigoplus_{n\geq 2} \H_n'$. Moreover, $\D(C_\xi)$ is dense and $\n{\xi_n}$ is constant for all $n\in \N$. 
		\item The counterexample  given by \cite[Example 4.1]{Stoeva_lower_p} is simpler, namely, $\xi=\{\xi_n\}_{n\geq 2}$ where $\xi_n=n(e_1+e_n)$ and $\{e_n\}_{n\in \N}$ is an orthonormal basis  of $\H$. 
		By the way, for this lower semi-frame we have $\D(C_\xi)^\perp=\{e_1\}$, i.e. $\D(C_\xi)$ is not dense. Actually, it is sufficient to take $\xi=\{\xi_n\}_{n\geq 2}$ with $\xi_n=e_1+e_n$; $\xi$ is a lower semi-frame and $\D(C_\xi)=\{e_1\}^\perp$. This example can be also generalized to any minimal sequence $\xi$ with $\D(C_\xi)$ not dense: we cannot have \eqref{weak_rec_1seq(b)} for every $f\in \H$, otherwise $\xi$ would be a Schauder basis and then $\D(C_\xi)$ would be dense. 
	\end{enumerate}		
\end{rem}

We will find and discuss another example of lower semi-frames which does not give expansion in the whole space (Example \ref{countex_transl}).

\begin{exm}
	\label{exm_diana_mod}
	Let us consider the sequence 
	$\xi=\{\xi_n\}_{n\geq 2}$, where $\xi_n=e_1+e_n$ and $\{e_n\}_{n\in \N}$ is an orthonormal basis  of $\H$, that we have introduced before. As already said, $\xi=\{\xi_n\}_{n\geq 2}$ is a lower semi-frame and $\D(C_\xi)=\{e_1\}^\perp$. Our aim here is to find the canonical dual $\{T_\xi^{-1}P\xi_n\}_{n\in \N}$. First of all 
	$$
	\t_\xi(f,g)=\sum_{n=2}^\infty \pin{f}{\xi_n}\pin{\xi_n}{g}=\pin{f}{g}, \qquad\forall f,g\in \D(C_\xi),
	$$
	thus $T_\xi f =f$ for $f\in \D(T_\xi)=\D(C_\xi)=\{e_1\}^\perp$. Now, denoting with $P$ the projector onto $\{e_1\}^\perp$, we have $T_\xi^{-1}P\xi_n=T_\xi^{-1} e_n=e_n$   
	for every $n\geq 2$. \\
	Note that the statements in \cite[Proposition 3.11]{Anto_Bal_1} and in  \cite[Proposition 3.11]{Anto_Bal} are incorrect because a lower semi-frame, like the one in this example, may not have upper semi-frames for the whole space (i.e. complete Bessel sequences) as duals. The reconstruction formula is only guaranteed in $\D(C_\xi)$. 
\end{exm}

We mention that in \cite{Anto_Bal_1,Anto_Bal} a formula like \eqref{weak_rec_1seq(a)} has been given starting from an upper semi-frame instead of a lower semi-frame. There is a crucial difference in the two approaches. Namely, for an upper semi-frame $\xi$, the operator $T_\xi$ is injective, bounded (so that $T_\xi=S_\xi$) but its range $R(T_\xi)$ is a proper subspace of $\H$, unless $\xi$ is a frame. Therefore, $\{T_\xi^{-1}\xi_n\}_{n\in \N}$ might be not well-defined, as shown in \cite[Section 2.6]{Anto_Bal_1} and also in Remark \ref{rem_upp_trans}.

To conclude this section, if $S_\xi=T_\xi$, then we obtain also the following  reconstruction formula in a strong sense
\begin{align}
	\label{eq_6}
	f&=T_\xi T_\xi^{-1}f=\sum_{n\in \N}\pin{T_\xi^{-1} f}{\xi_n}\xi_n=\sum_{n\in \N}\pin{f}{T_\xi^{-1}P\xi_n}\xi_n, \qquad \forall f\in\ol{\D(C_\xi)}.
\end{align}
Note that in \eqref{eq_6} and in \eqref{weak_rec_1seq(b)} the sequences $\xi$ and $\{T_\xi^{-1}P\xi_n\}_{n\in \N}$ are in different places.  
Moreover, we can state a characterization when the synthesis operator is closed. 

\begin{pro}
	Let $\xi=\{\xi_n\}_{n\in \N}$ be a sequence of $\H$ with closed synthesis operator $D_\xi$. The following statements are equivalent.
	\begin{enumerate}
		\item $\xi$ is a lower semi-frame;
		\item there exists a Bessel sequence $\{\eta_n\}_{n\in \N}$ of $\H$ such that 
		\begin{equation}
			\label{eq_7}
			f=\sum_{n\in \N} \pin{f}{\eta_n}\xi_n, \qquad\forall f\in \H;
		\end{equation}
		\item $\mathcal{R}(D_\xi)=\H$.		
	\end{enumerate}
\end{pro}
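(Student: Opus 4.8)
The plan is to prove the cycle $(i)\Rightarrow(iii)\Rightarrow(ii)\Rightarrow(i)$, exploiting throughout that the hypothesis that $D_\xi$ is closed already fixes the whole functional-analytic picture. Indeed, by Proposition~\ref{pro_oper_CD}(iv) the closedness of $D_\xi$ forces $\D(C_\xi)$ to be dense and $D_\xi=C_\xi^*$; combined with part (i) of the same proposition, $C_\xi=D_\xi^*$ is closed, and $D_\xi$ is densely defined since $C_\xi$ is closed (hence closable). Thus $C_\xi$ and $D_\xi$ constitute an adjoint pair of closed, densely defined operators, and I may freely use the closed range theorem together with the kernel--range dualities $\ol{R(C_\xi)}=N(D_\xi)^\perp$ and $\ol{R(D_\xi)}=N(C_\xi)^\perp$.

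For $(i)\Leftrightarrow(iii)$, I would first note that, as the lower frame inequality is vacuous for $f\notin\D(C_\xi)$, the sequence $\xi$ is a lower semi-frame with bound $A$ precisely when $\n{C_\xi f}\geq\sqrt{A}\,\n{f}$ for every $f\in\D(C_\xi)$, i.e. when $C_\xi$ is bounded below. For a closed operator this is in turn equivalent to $C_\xi$ being injective with closed range. Now $N(C_\xi)=\{0\}$ translates, via $\ol{R(D_\xi)}=N(C_\xi)^\perp$, into $R(D_\xi)$ being dense, while closedness of $R(C_\xi)$ is equivalent by the closed range theorem to closedness of $R(D_\xi)$; dense together with closed gives $R(D_\xi)=\H$. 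Each step being reversible, this yields $(i)\Leftrightarrow(iii)$.

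For $(iii)\Rightarrow(ii)$ I would invoke the pseudo-inverse: since $D_\xi$ is closed, densely defined and has closed range $R(D_\xi)=\H$, the bounded operator $D_\xi^\dagger:\H\to\ell^2(\N)$ recalled in the Preliminaries exists and satisfies $D_\xi D_\xi^\dagger f=f$ for every $f\in\H$. Setting $\eta_n:=(D_\xi^\dagger)^*e_n$, where $\{e_n\}_{n\in\N}$ is the canonical basis of $\ell^2(\N)$, a one-line computation gives $\pin{f}{\eta_n}=(D_\xi^\dagger f)_n$, so that $C_\eta=D_\xi^\dagger$ is bounded and defined on all of $\H$; hence $\eta$ is a Bessel sequence. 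Moreover $\{\pin{f}{\eta_n}\}_n=D_\xi^\dagger f\in\D(D_\xi)$ and $\sum_n\pin{f}{\eta_n}\xi_n=D_\xi D_\xi^\dagger f=f$ for all $f\in\H$, which is exactly \eqref{eq_7}. Finally, $(ii)\Rightarrow(i)$ is the softest step: given such a Bessel $\eta$ with bound $B$, for every $f\in\H$ the identity $\n{f}^2=\sum_n\pin{f}{\eta_n}\pin{\xi_n}{f}$ together with the Cauchy--Schwarz inequality yields $\n{f}^2\leq\sqrt{B}\,\n{f}\,\bigl(\sum_n|\pin{f}{\xi_n}|^2\bigr)^{1/2}$, whence $\tfrac1B\n{f}^2\leq\sum_n|\pin{f}{\xi_n}|^2$ and $\xi$ is a lower semi-frame.

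I expect the main obstacle to be the step $(iii)\Rightarrow(ii)$, where care is needed to confirm that all hypotheses of the pseudo-inverse (closed, densely defined, closed range) are genuinely in place and, above all, to verify that the candidate dual $\eta_n=(D_\xi^\dagger)^*e_n$ is Bessel and that its coefficient sequences land in $\D(D_\xi)$, so that the reconstruction series converges in the strong sense. The equivalence $(i)\Leftrightarrow(iii)$ then rests entirely on a correct bookkeeping of the closed range theorem for the adjoint pair $(C_\xi,D_\xi)$.
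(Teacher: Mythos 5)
Your proof is correct, but it follows a genuinely different route from the paper's. The paper proves the cycle \emph{(i)}$\Rightarrow$\emph{(ii)}$\Rightarrow$\emph{(iii)}$\Rightarrow$\emph{(i)}: for \emph{(i)}$\Rightarrow$\emph{(ii)} it leans on the machinery already built in the paper (closedness of $D_\xi$ gives $S_\xi=T_\xi$ by Proposition \ref{pro_oper_CD}, so the reconstruction formula \eqref{eq_6} applies with the canonical Bessel dual $\{T_\xi^{-1}\xi_n\}$); \emph{(ii)}$\Rightarrow$\emph{(iii)} is the same one-line observation you make; and \emph{(iii)}$\Rightarrow$\emph{(i)} is simply cited from Christensen's \cite[Theorem 4.1]{Ole_pseudo}. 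You instead run the cycle in the opposite direction and make it entirely self-contained: \emph{(i)}$\Leftrightarrow$\emph{(iii)} via the closed range theorem for the adjoint pair $(C_\xi,D_\xi)$ (lower semi-frame $\Leftrightarrow$ $C_\xi$ bounded below $\Leftrightarrow$ $C_\xi$ injective with closed range $\Leftrightarrow$ $R(D_\xi)$ dense and closed), \emph{(iii)}$\Rightarrow$\emph{(ii)} via the pseudo-inverse $D_\xi^\dagger$ with $\eta_n=(D_\xi^\dagger)^*e_n$, and \emph{(ii)}$\Rightarrow$\emph{(i)} by Cauchy--Schwarz. All steps check out: the closedness of $D_\xi$ does force $\D(C_\xi)$ dense and $D_\xi=C_\xi^*$, the kernel--range dualities you invoke are the standard ones, and your verification that $D_\xi^\dagger f\in\D(D_\xi)$ with $D_\xi D_\xi^\dagger f=f$ is exactly what makes the reconstruction series converge strongly. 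What the paper's route buys is brevity and a dual that is manifestly the canonical one $T_\xi^{-1}\xi_n$; what yours buys is independence from the cited reference (you effectively reprove \emph{(iii)}$\Rightarrow$\emph{(i)}) and from the operator $T_\xi$ altogether --- though it is worth noting that your $\eta_n=(D_\xi^\dagger)^*e_n$ coincides with $T_\xi^{-1}\xi_n$, since $D_\xi^\dagger=C_\xi T_\xi^{-1}$ in this situation, so the two constructions produce the same Bessel dual.
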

\begin{proof}
	(i) $\implies$ (ii) Since $D_\xi$ is closed, by Proposition \ref{pro_oper_CD},  $\D(C_\xi)$ is dense and $S_\xi=D_\xi C_\xi=C_\xi^*C_\xi=T_\xi$. Hence,  \eqref{eq_6} holds for every $f\in \H$. Moreover, as we have seen before, $\{T_\xi^{-1}\xi_n\}$ is a Bessel sequence.\\
	(ii) $\implies$  (iii)  Since $\{\pin{f}{\eta_n}\}_{n\in \N}\in \ell^2(\N)$ for every $f\in \H$, \eqref{eq_7} implies that $\mathcal{R}(D_\xi)=\H$.\\
	(iii) $\implies$  (i) This implication was originally proved in \cite[Theorem 4.1]{Ole_pseudo} and also in a different way in the discussion after \cite[Theorem 5.3]{Stoeva_arxiv}. 
\end{proof}

We can actually state one more property of $T_\xi$ making a consideration analogous to \eqref{dual_Bess}. 

\begin{rem}
	Let $\xi$ be a lower semi-frame of $\H$ with generalized frame operator $T_\xi$ and let $P$ be the projection of $\H$ onto $\H_\xi$. Then, 	$T_\xi^{-1/2}P\xi$ is a {\it Parseval frame} for $\H_\xi$ (i.e. it has frame bounds $A=B=1$) and therefore 
	$$
	f=\sum_{n\in I} \pin{f}{T_\xi^{-\mez}P\xi_n}T_\xi^{-\mez}P\xi_n, \qquad \forall f\in \ol{\D(C_\xi)}.
	$$
	When $\D(C_\xi)$ is dense, then $T_\xi^{-1/2}\xi$ is a Parseval frame for $\H$ (a well-known fact for a frame $\xi$, in which case $T_\xi^{-1/2}\xi$ is called the {\it canonical tight frame} of $\xi$). The considerations in this remark have been further investigated in \cite{ACT}.
\end{rem}

\section{Relations between $T_\xi$, $S_\xi$ and $W_\xi$}
\label{sec_TW}

In this section we are going to give a negative answer to Question \ref{quest3}. In \cite[Example 1]{Corso_seq} it was shown a sequence $\xi$ such that $\D(C_\xi)$ is dense and $T_\xi$ is different from $S_\xi$ (moreover it is possible to choose a lower semi-frame for $\xi$, see \cite[Example 2]{Corso_seq}). We will encounter in Example \ref{ST_translates} another case in which these operators are different in the context of sequences of translates.

When $\D(C_\xi)$ is dense then $W_\xi\subseteq T_\xi$ also holds, where $W_\xi$ is the {\it weak frame operator} introduced in \eqref{def_W}. However, we construct here a sequence $\xi$ such that $\D(C_\xi)$ is dense and $T_\xi$ is strictly larger than $W_\xi$.

\begin{exm}
	\label{count_exm_TW_xi} 
	Let $\{e_n\}_{n\in \N}$ be an orthonormal basis  of $\H$. 
	Let $\xi=\{\xi_n\}_{n\in \N}$, $\eta=\{\eta_n\}_{n\in \N}$ be two sequences of $\H$ such that $\xi_1=e_1$ and $ \xi_n=n^{\frac{8}{5}}(e_n-e_{n-1})$ for $n\geq 2$ 
	and $\eta_n=n^{\frac{1}{2}}e_n$ for $n\in \N$.  Given $f\in \H$ we denote by $f_n:=\pin{f}{e_n}$. We have
	$$\D(C_\xi)=\left\{f\in \H: \sum_{n=2}^\infty n^\frac{16}{5}|f_n-f_{n-1}|^2< \infty\right\},$$ 
	$$\D(C_\eta)=\left\{f\in \H: \sum_{n=1}^\infty n|f_n|^2< \infty\right\}. $$  
	We define the new sequence $\chi:=\{\xi_1,\eta_1,\xi_2,\eta_2,\dots\}$. The analysis operator $C_\chi$ has domain $\D(C_\chi)= \D(C_\xi)\cap \D(C_\eta)$, which is dense because it contains $\{e_n\}_{n\in \N}$.
	We are going to show that $W_{\chi}\subsetneq T_{\chi}$. First of all, for $f\in \H$ and $k\geq 2$ we find that
	\begin{align}
		\sum_{n=1}^{2k} \pin{f}{\chi_n}\chi_n &=\sum_{n=1}^{k} \pin{f}{\xi_n}\xi_n+\sum_{n=1}^{k} \pin{f}{\eta_n}\eta_n \nonumber\\
		&=(2f_1-2^{\frac{16}{5}}f_2+2^{\frac{16}{5}}f_1)e_1\nonumber\\
		&+\sum_{n=2}^{k-1} (n^\frac{16}{5}f_n-n^\frac{16}{5}f_{n-1}-(n+1)^\frac{16}{5}f_{n+1}+(n+1)^\frac{16}{5}f_n+nf_{n})e_n\nonumber\\
		&+(k^\frac{16}{5}f_k-k^\frac{16}{5}f_{k-1}+kf_k)e_k, \label{eq_sum1}
	\end{align}	
	and 
	\begin{align}
		\sum_{n=1}^{2k-1} \pin{f}{\chi_n}\chi_n &=\sum_{n=1}^{k} \pin{f}{\xi_n}\xi_n+\sum_{n=1}^{k-1} \pin{f}{\eta_n}\eta_n \nonumber\\
		&=(2f_1-2^{\frac{16}{5}}f_2+2^{\frac{16}{5}}f_1)e_1\nonumber\\
		&+\sum_{n=2}^{k-1} (n^\frac{16}{5}f_n-n^\frac{16}{5}f_{n-1}-(n+1)^\frac{16}{5}f_{n+1}+(n+1)^\frac{16}{5}f_n+nf_{n})e_n \nonumber\\
		&+(k^\frac{16}{5}f_k-k^\frac{16}{5}f_{k-1})e_k. \label{eq_sum2}
	\end{align}	
	In the rest of the example we choose $h\in \H$ such that $h_n=\pin{h}{e_n}=n^{-2}$ for all $n\in \N$. We have 
	\begin{equation}
		\label{eq_2}
		n^{\frac{16}{5}}\left|h_n-h_{n-1}\right|^2
		=4n^{-\frac{14}{5}}+o(n^{-\frac{14}{5}})
	\end{equation}
	where we write $o(n^a)$ for the usual notation of a function $\N\to \C$ such that $o(n^a)/n^a\to 0$ for $n\to \infty$. Thus, by \eqref{eq_2}, $h\in \D(C_{\chi})$. 	For $k\geq 2$ we write in a compact way $\sum_{n=1}^{2k} \pin{h}{\chi_n}\chi_n =(2-2^{-\frac{4}{5}}+2^{\frac{16}{5}})e_1+\sum_{n=1}^{k-1} \alpha_n e_n+\beta_k e_k$ and $\sum_{n=1}^{2k-1} \pin{h}{\chi_n}\chi_n =(2-2^{-\frac{4}{5}}+2^{\frac{16}{5}})e_1+\sum_{n=1}^{k-1} \alpha_n e_n+\gamma_k e_k$. From \eqref{eq_sum1} and \eqref{eq_sum2}, straightforward calculations lead to $\alpha_n=\frac{2}{5}n^{-\frac{4}{5}}+o(n^{-\frac{4}{5}})$, $\beta_k=-2k^\frac{1}{5}+o(k^\frac{1}{5})$ and also $\gamma_k=-2k^\frac{1}{5}+o(k^\frac{1}{5})$. 
	Now let $g\in \D(C_{\chi})$ and write again $g_n:=\pin{g}{e_n}$. The series  $\sum_{n=1}^{2k} \pin{h}{\chi_n}\pin{\chi_n}{g}=(2-2^{-\frac{4}{5}}+2^{\frac{16}{5}})\ol{g_1}+\sum_{n=2}^{k-1} \alpha_n \ol{g_n}+\beta_k \ol{g_k}$  and
	$\sum_{n=1}^{2k-1} \pin{h}{\chi_n}\pin{\chi_n}{g}=(2-2^{-\frac{4}{5}}+2^{\frac{16}{5}})\ol{g_1}+\sum_{n=2}^{k-1} \alpha_n \ol{g_n}+\gamma_k \ol{g_k}$
	are convergent for $k\to \infty$ and convergent to the same limit. Indeed, $\{\alpha_n\}_{n\geq 2}$ belongs to $\ell^2(\N)$ and, for sufficiently large values of $k$, $|\beta_k||g_k|,|\gamma_k||g_k|\approx 2k^{\frac{1}{5}}|g_k|\leq 2k^{\frac{1}{2}}|g_k|\to 0$, since $g$ is in particular in $\D(C_\eta)$. Moreover, $$\left |\sum_{n=1}^\infty  \pin{h}{\chi_n}\pin{\chi_n}{g}\right |\leq C\|g\|, \qquad \forall g\in \D(C_\chi),$$
	for some $C>0$.  We thus conclude that $h\in \D(T_{\chi})$. 
	
	Anyway, 
	$\sum_{n=1}^{k} \pin{h}{\chi_n}\chi_n$ cannot converge weakly for $k\to \infty$ on $\H$, because its norm goes to infinity for $k\to \infty$. In other words, $h\notin \D(W_{\chi})$.
\end{exm}

The operator $W_\xi$ may be of independent interest, thus we end this section spending some more words on it. 
For completeness we also define the operator
$Q_\xi:\D(Q_\xi)\subseteq \ell^2(\N) \to \H$  on the domain
$$\D(Q_\xi):=\left \{ \{c_n\}\in \ell^2(\N):\sum_{n\in \N} c_n \xi_n \text{ is weakly convergent in } \H\right \}$$ as
$Q_\xi\{c_n\}=\sum_{n\in \N} c_n \xi_n$ in weak sense. 
Some properties of $Q_\xi$ and $S_\xi$ are given below and can be proved as done in \cite[Proposition 3.3]{Classif} for $D_\xi$ and $S_\xi$.

\begin{pro}
	\label{pro_QW_xi}
	Let $\xi=\{\xi_n\}_{n\in \N}$ be a sequence of $\H$. The following statements hold.
	\begin{enumerate}
		\item $Q_\xi$ is densely defined and  $D_\xi\subseteq Q_\xi$ and $Q_\xi^*=C_\xi$.
		\item If $\D(C_\xi)$ is dense, then $Q_\xi\subseteq C_\xi^*$.
		\item $Q_\xi$ is closable if and only if $\D(C_\xi)$ is dense if and only if $D_\xi$ is closable.
		\item $Q_\xi$ is closed  if and only if  $\D(C_\xi)$ is dense and $C_\xi^*=Q_\xi$. 
		\item $S_\xi\subseteq W_\xi$ and $W_\xi=Q_\xi C_\xi$. 
	\end{enumerate}
\end{pro}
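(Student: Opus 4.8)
The plan is to mirror the proof of Proposition \ref{pro_oper_CD}, the only new ingredient being the passage from strong to weak convergence; the backbone is the identity $Q_\xi^* = C_\xi$, from which parts (ii)--(iv) follow by abstract adjoint theory, exactly as for $D_\xi$ in \cite[Proposition 3.3]{Classif}.

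First I would establish (i). Since every finitely supported sequence lies in $\D(Q_\xi)$ (a finite sum converges trivially) and such sequences are dense in $\ell^2(\N)$, the operator $Q_\xi$ is densely defined. The inclusion $D_\xi \subseteq Q_\xi$ is immediate, because strong convergence of $\sum_n c_n\xi_n$ implies weak convergence to the same limit, so $\D(D_\xi)\subseteq \D(Q_\xi)$ and the actions agree. For the adjoint, fix $g\in\H$ and suppose $h\in\ell^2(\N)$ satisfies $\langle Q_\xi c, g\rangle = \langle c, h\rangle_2$ for all $c\in\D(Q_\xi)$. Testing this on the canonical vectors $e_n\in\D(Q_\xi)$, where $Q_\xi e_n=\xi_n$, forces $h_n = \langle g,\xi_n\rangle$; hence $g\in\D(Q_\xi^*)$ requires $\{\langle g,\xi_n\rangle\}\in\ell^2(\N)$, i.e. $g\in\D(C_\xi)$. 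Conversely, for $g\in\D(C_\xi)$ the definition of weak convergence yields $\langle Q_\xi c, g\rangle = \sum_n c_n\langle\xi_n, g\rangle = \langle c, C_\xi g\rangle_2$ for every $c\in\D(Q_\xi)$ (the series converging absolutely by Cauchy--Schwarz). Thus $Q_\xi^* = C_\xi$.

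Parts (ii)--(iv) are then purely formal consequences of $Q_\xi^*=C_\xi$ and the density of $\D(Q_\xi)$. Since $Q_\xi$ is densely defined, it is closable if and only if $Q_\xi^* = C_\xi$ is densely defined, i.e. $\D(C_\xi)$ is dense, which is (iii). Under that hypothesis $Q_\xi$ is closable and $\overline{Q_\xi} = Q_\xi^{**} = (Q_\xi^*)^* = C_\xi^*$, so $Q_\xi\subseteq \overline{Q_\xi} = C_\xi^*$, giving (ii). For (iv), if $Q_\xi$ is closed then it is in particular closable, so $\D(C_\xi)$ is dense and $Q_\xi = \overline{Q_\xi} = C_\xi^*$; conversely, if $\D(C_\xi)$ is dense and $Q_\xi = C_\xi^*$, then $Q_\xi$ is an adjoint operator and hence closed.

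Finally, for (v), the inclusion $S_\xi\subseteq W_\xi$ again follows from strong $\Rightarrow$ weak convergence applied to $\sum_n\langle f,\xi_n\rangle\xi_n$, together with $S_\xi=D_\xi C_\xi$ from Proposition \ref{pro_oper_CD}(v). The identity $W_\xi = Q_\xi C_\xi$ is the one step requiring care: by definition $\D(Q_\xi C_\xi) = \{f\in\D(C_\xi): C_\xi f\in\D(Q_\xi)\}$, whereas $\D(W_\xi)$ a priori only asks $f\in\H$. The main obstacle, and the crux of the argument, is to rule out that $\D(W_\xi)$ is strictly larger than $\D(C_\xi)$. This is resolved by the test $g=f$: if $\sum_n\langle f,\xi_n\rangle\xi_n$ converges weakly, then pairing its partial sums against $f$ gives convergence of $\sum_n|\langle f,\xi_n\rangle|^2$, forcing $f\in\D(C_\xi)$. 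Hence $\D(W_\xi)\subseteq\D(C_\xi)$, the two domains coincide, and the two operators act identically as $\sum_n\langle f,\xi_n\rangle\xi_n$ in the weak sense, so $W_\xi = Q_\xi C_\xi$.
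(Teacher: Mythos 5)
Your proof is correct and follows exactly the route the paper intends: it states that the proposition ``can be proved as done in \cite[Proposition 3.3]{Classif} for $D_\xi$ and $S_\xi$,'' and your argument is precisely that adaptation --- establishing $Q_\xi^*=C_\xi$ by testing on the canonical basis, deducing (ii)--(iv) from standard adjoint theory, and obtaining $\D(W_\xi)\subseteq\D(C_\xi)$ by pairing the partial sums against $f$ itself. No gaps.
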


An application of Banach-Steinhaus theorem shows that $\xi$ is a Bessel sequence if and only if $\D(Q_\xi)=\ell^2(\N)$ (in that case $Q_\xi=D_\xi$) if and only if $\D(W_\xi)=\H$ (in that case $W_\xi=S_\xi$). 
Sufficient conditions for which $T_\xi=S_\xi$ or  $T_\xi=W_\xi$ are given in the following propositions. 

\begin{pro}
	\label{pro_cs_S=T}
	Let $\xi=\{\xi_n\}_{n\in \N}$ be a sequence of $\H$ such that $\D(C_\xi)$ is dense.  The following statements hold.
	\begin{enumerate}
		\item If $D_\xi$ is closed, then $T_\xi=S_\xi$.
		\item If $Q_\xi$ is closed, then  $T_\xi=W_\xi$.
		\item If $f\in \D(T_\xi)$, then $f\in \D(W_\xi)$ if and only if $\sup_k \n{\sum_{n=1 }^k \pin{f}{\xi_n}\xi_n}<\infty$.   
	\end{enumerate}
\end{pro}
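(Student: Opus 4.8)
The plan is to dispatch parts (i) and (ii) by the factorization identities already available, and to treat part (iii) by a density argument resting on the definition of $T_\xi$.

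For part (i), recall that $S_\xi = D_\xi C_\xi$ by Proposition \ref{pro_oper_CD}(v), and that since $\D(C_\xi)$ is dense we have $\H_\xi = \H$, so the adjoint $C_\xi^\times$ taken in $\H_\xi$ is just $C_\xi^*$ and $T_\xi = C_\xi^\times C_\xi = C_\xi^* C_\xi$. If $D_\xi$ is closed, Proposition \ref{pro_oper_CD}(iv) gives $D_\xi = C_\xi^*$, whence $S_\xi = D_\xi C_\xi = C_\xi^* C_\xi = T_\xi$. Part (ii) runs along identical lines with $Q_\xi$ in place of $D_\xi$: by Proposition \ref{pro_QW_xi}(v) one has $W_\xi = Q_\xi C_\xi$, and if $Q_\xi$ is closed then $Q_\xi = C_\xi^*$ by Proposition \ref{pro_QW_xi}(iv), so $W_\xi = Q_\xi C_\xi = C_\xi^* C_\xi = T_\xi$.

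For part (iii), write $s_k := \sum_{n=1}^k \pin{f}{\xi_n}\xi_n$. The forward implication is the easy one: if $f \in \D(W_\xi)$ then $\{s_k\}$ converges weakly in $\H$, and a weakly convergent sequence is norm-bounded by the uniform boundedness principle, so $\sup_k \n{s_k} < \infty$. The converse is the substantial direction, and it is here that the hypothesis $f \in \D(T_\xi)$ is used. Suppose $\sup_k \n{s_k} =: M < \infty$. Since $f \in \D(T_\xi)$, for every $g \in \D(C_\xi)$ we have $\pin{s_k}{g} = \sum_{n=1}^k \pin{f}{\xi_n}\pin{\xi_n}{g} \to \pin{T_\xi f}{g}$, so in particular $\{\pin{s_k}{g}\}_k$ is Cauchy on the dense subspace $\D(C_\xi)$. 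Combining this with the uniform bound through a standard estimate — for $h \in \H$ pick $g \in \D(C_\xi)$ with $\n{h-g}$ small and bound $|\pin{s_k - s_j}{h}| \le |\pin{s_k - s_j}{g}| + \n{s_k - s_j}\,\n{h-g} \le |\pin{s_k - s_j}{g}| + 2M\n{h-g}$ — one shows $\{\pin{s_k}{h}\}_k$ is Cauchy for every $h \in \H$. The resulting limit functional $h \mapsto \lim_k \pin{s_k}{h}$ is bounded (by $M$), so by Riesz representation it equals $\pin{w}{\cdot}$ for some $w \in \H$; hence $s_k \to w$ weakly, i.e.\ $f \in \D(W_\xi)$ (and in fact $W_\xi f = w = T_\xi f$).

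The only genuine obstacle is the backward direction of (iii), where one must upgrade convergence of $\pin{s_k}{g}$ from the dense set $\D(C_\xi)$ to all of $\H$. This is precisely where the uniform bound on $\n{s_k}$ is indispensable — without it, convergence on a dense subspace yields no control of the tails elsewhere — while the membership $f \in \D(T_\xi)$ is exactly what supplies that convergence on $\D(C_\xi)$ to begin with.
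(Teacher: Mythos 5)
Your proof is correct and follows essentially the same route as the paper: parts (i)--(ii) via the factorizations $S_\xi=D_\xi C_\xi$, $W_\xi=Q_\xi C_\xi$ together with $D_\xi=C_\xi^*$ (resp.\ $Q_\xi=C_\xi^*$) when closed, and part (iii) via the same $\epsilon/2 + 2M\n{h-g}$ approximation argument upgrading convergence of $\pin{s_k}{g}$ from the dense subspace $\D(C_\xi)$ to all of $\H$. Your explicit appeal to the Riesz representation theorem to produce the weak limit $w$ (and the observation $W_\xi f=T_\xi f$) is a small but welcome addition that the paper leaves implicit.
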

\begin{proof}
	Points (i-ii) are consequences of Propositions \ref{pro_oper_CD} and \ref{pro_QW_xi}: in particular, if $D_\xi$ is closed, then $T_\xi=C_\xi^*C_\xi=D_\xi C_\xi=S_\xi$. For (iii), let us remember that $W_\xi  \subseteq T_\xi$. Take first $f\in \D(W_\xi)$; then $\{\sum_{n=1 }^k \pin{f}{\xi_n}\xi_n\}_{k\in \N}$ is norm bounded because it is weakly convergent (\cite[Sect. III.1.6]{Kato}). Now let $f\in \D(T_\xi)$ be such that 
	$M=\sup_k \n{\sum_{n=1 }^k \pin{f}{\xi_n}\xi_n}<\infty$. We suppose $M>0$, otherwise the statement is trivial. For $h\in \H$ and $\epsilon>0$, there exist $g\in \D(C_\xi)$ such that $\n{h-g}<\frac{\epsilon}{4M}$ and $N\in \N$ such that $|\pin{\sum_{n=k}^m \pin{f}{\xi_n}\xi_n }{g}|<\frac{\epsilon}{2}$ for all $m\geq k\geq N$. Therefore
	\begin{align*}
		\left |\left\langle\sum_{n=k}^m \pin{f}{\xi_n}\xi_n,h\right\rangle\right |\leq&\left |\left\langle\sum_{n=k}^m \pin{f}{\xi_n}\xi_n ,g\right\rangle\right |+\left |\left\langle\sum_{n=k}^m \pin{f}{\xi_n}\xi_n ,h-g\right\rangle\right | \\
		\leq& \frac{\epsilon}{2}+\left \|\sum_{n=k}^m \pin{f}{\xi_n}\xi_n \right \|\|h-g\|< \frac{\epsilon}{2}+2M\frac{\epsilon}{4M}=\epsilon.
	\end{align*}
	Hence $f\in \D(T_\xi)$. 
\end{proof}

\begin{pro}
	\label{pro_cs_S=T_2}
	If $\xi$ is biorthogonal to a complete sequence and it is a lower semi-frame of $\H$, then $S_\xi=T_\xi$ if and only if $\xi$ is a Schauder basis of $\H$.
\end{pro}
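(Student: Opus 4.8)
The plan is to exploit the biorthogonal sequence $\{\zeta_n\}_{n\in\N}$ that minimality provides (so $\pin{\xi_m}{\zeta_n}=\delta_{m,n}$) together with the completeness of $\xi$, which comes for free from the lower semi-frame property. The observation used in both implications is that every $\zeta_n$ lies in $\D(C_\xi)$: since $\pin{\zeta_n}{\xi_m}=\delta_{n,m}$ one has $\sum_m|\pin{\zeta_n}{\xi_m}|^2=1<\infty$. Hence $\mathrm{span}\{\zeta_n\}\subseteq\D(C_\xi)$, and testing the form $\t_\xi$ against the vectors $\zeta_n$ lets me read off the action of $T_\xi$ and compare it with that of $S_\xi$.

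For the implication ``Schauder basis $\Rightarrow S_\xi=T_\xi$'' I first recall that the biorthogonal sequence of a Schauder basis is itself a complete Schauder basis, so $\mathrm{span}\{\zeta_n\}$ is dense; hence $\D(C_\xi)$ is dense and, by the discussion in Section \ref{secSeq_1seq}, $S_\xi\subseteq T_\xi$. It remains to prove $\D(T_\xi)\subseteq\D(S_\xi)$ with matching values. Given $f\in\D(T_\xi)$, the vector $T_\xi f$ lies in $\H_\xi=\H$ and so admits a norm-convergent Schauder expansion $T_\xi f=\sum_n \pin{T_\xi f}{\zeta_n}\xi_n$. Plugging $g=\zeta_n$ into $\t_\xi(f,g)=\pin{T_\xi f}{g}$ and using biorthogonality gives $\pin{T_\xi f}{\zeta_n}=\sum_m\pin{f}{\xi_m}\pin{\xi_m}{\zeta_n}=\pin{f}{\xi_n}$. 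Thus $T_\xi f=\sum_n\pin{f}{\xi_n}\xi_n$ converges in norm, i.e.\ $f\in\D(S_\xi)$ and $S_\xi f=T_\xi f$, so $T_\xi\subseteq S_\xi$ and equality follows.

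For the converse ``$S_\xi=T_\xi\Rightarrow$ Schauder basis'' the decisive step is to force $\D(C_\xi)$ to be dense. I evaluate both operators on each $\zeta_k$: on one hand $\t_\xi(\zeta_k,g)=\pin{\xi_k}{g}$ is bounded in $g\in\D(C_\xi)$, so $\zeta_k\in\D(T_\xi)$ and $T_\xi\zeta_k=P\xi_k$; on the other hand the defining series for $S_\xi\zeta_k$ collapses to the single term $\xi_k$, so $\zeta_k\in\D(S_\xi)$ and $S_\xi\zeta_k=\xi_k$. If $S_\xi=T_\xi$ these must agree, forcing $\xi_k=P\xi_k\in\H_\xi$ for every $k$. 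Since $\xi$ is complete (being a lower semi-frame), $\H=\ol{\mathrm{span}}\{\xi_k\}\subseteq\H_\xi$, whence $\H_\xi=\H$ and $\D(C_\xi)$ is dense. Then formula \eqref{eq_6} (with $P=I$; recall $T_\xi$ is boundedly invertible by Proposition \ref{car_seq_T_xi}) yields the norm-convergent expansion $f=\sum_n\pin{f}{T_\xi^{-1}\xi_n}\xi_n$ for every $f\in\H$; minimality guarantees uniqueness of the coefficients, since for any convergent $\sum_n c_n\xi_n=f$ one recovers $c_m=\pin{f}{\zeta_m}$ by continuity. Hence $\xi$ is a Schauder basis.

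I expect the main obstacle to be exactly the density of $\D(C_\xi)$ in the converse: a minimal lower semi-frame can genuinely fail to have $\D(C_\xi)$ dense (as in Remark \ref{rem_cntrex_lower} and Example \ref{exm_diana_mod}), and without density even $S_\xi\subseteq T_\xi$ breaks down, so the hypothesis $S_\xi=T_\xi$ must itself be used to exclude this. The comparison $T_\xi\zeta_k=P\xi_k$ versus $S_\xi\zeta_k=\xi_k$ is the lever that turns the operator identity into the membership $\xi_k\in\H_\xi$, after which completeness closes the argument. The remaining verifications (boundedness of the functional defining $T_\xi\zeta_k$ and uniqueness of the Schauder coefficients from biorthogonality) are routine.
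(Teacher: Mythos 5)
Your proof is correct, and its skeleton matches the paper's: both directions hinge on the biorthogonal sequence, the forward direction expands $T_\xi f$ in the Schauder basis to land in $\D(S_\xi)$, and the converse invokes the bounded invertibility of $T_\xi$ (Proposition \ref{car_seq_T_xi}) to produce the expansion $f=\sum_n\pin{f}{T_\xi^{-1}\xi_n}\xi_n$ and minimality for uniqueness of coefficients. The one place where you genuinely diverge is the density of $\D(C_\xi)$ in the converse direction. The paper disposes of this at the outset by asserting that the biorthogonal sequence of a minimal lower semi-frame is complete and contained in $\D(C_\xi)$; but completeness of the biorthogonal system is not automatic for a minimal lower semi-frame --- the sequence $\{e_1+e_n\}_{n\geq 2}$ of Remark \ref{rem_cntrex_lower}(ii) is minimal, is a lower semi-frame, and its (unique) biorthogonal sequence $\{e_n\}_{n\geq 2}$ is not complete, consistently with $\D(C_\xi)=\{e_1\}^\perp$ not being dense there. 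Your route instead extracts density from the hypothesis itself: comparing $S_\xi\zeta_k=\xi_k$ with $T_\xi\zeta_k=P\xi_k$ forces $\xi_k\in\H_\xi$, and completeness of $\xi$ (as a lower semi-frame) then gives $\H_\xi=\H$. This is exactly the right lever, it closes a gap that the paper's own argument leaves open, and it correctly isolates why the example above does not contradict the proposition (there $S_\xi\neq T_\xi$). A minor stylistic difference: in the forward direction you read off $\pin{T_\xi f}{\zeta_n}=\pin{f}{\xi_n}$ directly from the representation $\t_\xi(f,\zeta_n)=\pin{T_\xi f}{\zeta_n}$, whereas the paper passes through the self-adjointness identity $\pin{T_\xi f}{\widetilde{\xi_n}}=\pin{f}{T_\xi\widetilde{\xi_n}}$; both are valid, and yours avoids having to check $\zeta_n\in\D(T_\xi)$ at that stage.
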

\begin{proof}
	We note that $\D(C_\xi)$ is dense since the unique complete sequence $\widetilde {\xi}:=\{\widetilde {\xi_n}\}_{n\in I}$ biorthogonal to $\xi$ is contained in $\D(C_\xi)$.  Moreover $\widetilde {\xi_n}\in \D(S_\xi)$ and $S_\xi\widetilde{\xi_n}=\xi_n$ for all $n\in \N$. 
	Suppose that $S_\xi=T_\xi$. Then, by Proposition  \ref{car_seq_T_xi}, $0\in \rho(S_\xi)$, i.e. for every $f\in \H$
	$$f=S_\xi S_\xi^{-1}f=\sum_{n\in \N}\pin{S_\xi^{-1}f}{\xi_n}\xi_n=\sum_{n\in \N}\pin{f}{S_\xi^{-1}\xi_n}\xi_n.$$
	Since $\xi$ is minimal, this equality means that $\xi$ is a Schauder basis of $\H$. 
	Now suppose 
	that $\xi$ is a Schauder basis of $\H$ and let
	$h\in \D(T_\xi)$. Thus
	$$
	T_\xi h=\sum_{n\in \N}\pin{T_\xi h}{\widetilde {\xi_n}}\xi_n=\sum_{n\in \N}\pin{h}{T_\xi \widetilde {\xi_n}} \xi_n=\sum_{n\in \N}\left \langle h,\xi_n\right\rangle \xi_n.
	$$
	Therefore $h\in \D(S_\xi)$. 	
\end{proof}

\section{Sequences of translates}
\label{secSeq1_transl}

Let $\varphi \in L^2(\R)$. The {\it sequence of translates} of $\varphi$ with a parameter $a>0$ is $\mathcal{T}(\varphi,a):=\{\varphi_n\}_{n\in \Z}$ where $\varphi_n(x):=\varphi(x-na)$ for all $x\in \R$ (note that we are considering {\it uniform} translates, i.e. by $na$ for $n\in \Z$).   Our study will follow the line of the works \cite{Janssen} and \cite{Ben_Li,hsww,Ron_Shen} (for an introduction to the topics one can refer also to \cite{Chris,heil}). Indeed, first of all we want to find the analysis operator $C_\varphi$ and the generalized frame operator $T_\varphi$ of $\mathcal{T}(\varphi,a)$ (we omit the dependence on $a$ in the symbols $C_\varphi,T_\varphi$). Secondly, we want to characterize those sequences $\mathcal{T}(\varphi,a)$ which are lower semi-frames of their closed spans. 

We recall that the operator $T_\varphi$ is unconditionally defined; this means that $T_\varphi$ does not change if we order $\mathcal{T}(\varphi,a)$ as any sequence indexed by $\N$.  We mention that in the context of sequences of translates the index $\Z$ is sometimes ordered as $\Z=\{0,1,-1,2,-2,\dots\}$ (see for instance \cite{nielsikic}).

As we are going to see, the results are formulated in terms of the Fourier transform $\mathcal{F}$  defined in $L^1(\R)$ as 
$$
(\mathcal{F} f) (\gamma):=\wh f(\gamma):=\int_\R f(x) e^{-2\pi i x \gamma} dx, \qquad \gamma \in \R. 
$$
The Fourier transform defines a unitary operator $L^2(\R)\to L^2(\R)$ in a standard way, which we still denote by $\mathcal{F}$.  A remarkable property of $\mathcal{F}$ concerns  translations with modulations which are defined as $(\mathcal{T}_{y}f)(x)=f(x-y)$ and $(\mathcal{M}_\omega f) (x)=e^{2\pi i \omega x}f(x)$ for $x\in \R$ and $y,\omega\in \R$, respectively. More precisely (see e.g. \cite[Ch. 2]{Chris}), the Fourier transform interchanges translations with modulations, i.e. $\mathcal{F}\mathcal{T}_y=\mathcal{M}_{-y}\mathcal{F}$ and $\mathcal{F}\mathcal{M}_y=\mathcal{T}_{y}\mathcal{F}$. So we have in particular
$$\wh{\varphi_n}(\gamma)=\wh{\varphi}(\gamma)e^{-2 \pi i na \gamma}, \qquad \forall\gamma\in \R.$$

When $\mathcal{T}(\varphi,a)$ is a Bessel sequence, then $T_\varphi$ is the frame operator $S_\varphi$ and we already know the expression (in the Fourier domain) of $S_\varphi$, called the {\it Walnut representation}  
(see \cite[Theorem 10.2.1]{Chris}):
$$
\wh {S_\varphi f}(\gamma)=\frac{1}{a} \wh{\varphi}(\gamma)\sum_{n\in \Z} \wh \varphi \left (\gamma-\frac{n}{a}\right)\wh f\left(\gamma-\frac{n}{a}\right), \qquad \gamma\in \R.
$$ 
\no 
The expression of $T_\varphi$ when $\mathcal{T}(\varphi,a)$ is not a Bessel sequence, that we will find in  Theorem \ref{th_shift}, is the same but of course the domain is smaller than $L^2(\R)$. 

We now define the function 
$p_\varphi(\gamma):=\frac{1}{a}\sum_{n\in \Z}\left|\wh \varphi \left(\frac{\gamma+n}{a}\right) \right|^2$ (the definition of $p_\varphi$ here is slightly different than the one in \cite[Section 9.2]{Chris} because we include the factor $\frac{1}{a}$). We recall some results from \cite{Ron_Shen,hsww,Chris,heil}: $p_\varphi\in L^1(0,1)$, $\mathcal{T}(\varphi,a)$ is a Bessel sequence if and only if $p_\varphi$ is bounded a.e. in $[0,1]$ and that $\mathcal{T}(\varphi,a)$ is a frame for its closed span if and only if $p_\varphi$ is bounded above and below away from zero a.e. in $Z_\varphi$, where $Z_\varphi:=\{\gamma\in [0,1):p_\varphi(\gamma)>0\}$. The properties that  $\mathcal{T}(\varphi,a)$ is an orthonormal or a Riesz sequence are likewise characterized in terms of $p_\varphi$. 
We mention that $p_\varphi$ describes also various levels of linear independence (see \cite{Saliani,SikicSlamic,Slamic}). 

We also remark that some results have been extended to irregular frames of translates in \cite{BH} with an operator based approach. Moreover, the function that plays a central rule in \cite{BH} is $|\wh \varphi|^2$.

To prove our results about $C_\varphi$ and $T_\varphi$ we will make use of the following lemma. This is a more general version of \cite[Lemma 9.2.4]{Chris} and of \cite[Lemma 1.4.1]{Groechenig_b} (indeed, we do not assume that $f$ is bounded). 

\begin{lem}
	\label{lem_period}
	Let $a>0$ be given and $f,g:\R\to \C$ measurable functions such that $f$ is $a$-periodic. Then  $fg\in L^1(\R)$ if and only if $f(\cdot) \sum_{k\in \Z} |g(\cdot-ka)|\in L^1(0,a)$. Moreover, under these conditions,
	$$
	\int_{-\infty}^\infty f(x)g(x)dx=\int_0^a f(x) \sum_{k\in \Z} g(x-ka) dx.
	$$
\end{lem}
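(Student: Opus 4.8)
The plan is to reduce the integral over all of $\R$ to an integral over a single fundamental period by \emph{unfolding}: partition $\R=\bigsqcup_{k\in\Z}[ka,(k+1)a)$ and translate each piece back into $[0,a)$ using the invariance of Lebesgue measure under the shift $x\mapsto x+ka$. First I would run this argument on the nonnegative integrand $|fg|$. Because $f$ is $a$-periodic one has $|f(x+ka)|=|f(x)|$, so each summand becomes $\int_0^a|f(x)|\,|g(x+ka)|\,dx$, and the whole integral is recovered as a sum over $k\in\Z$ of such terms. This single computation will yield both the equivalence and half of the displayed identity.

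The key step is then the interchange of the sum over $k$ with the integral over $[0,a)$. For the \emph{if and only if} assertion this is harmless: every term is nonnegative, so Tonelli's theorem applies with no extra hypothesis and gives
$$\int_\R |f(x)g(x)|\,dx=\int_0^a |f(x)|\sum_{k\in\Z}|g(x+ka)|\,dx=\int_0^a |f(x)|\sum_{k\in\Z}|g(x-ka)|\,dx,$$
where the last equality is the cosmetic re-indexing $k\mapsto -k$. Finiteness of the left-hand side, i.e.\ $fg\in L^1(\R)$, is therefore equivalent to $|f(\cdot)|\sum_{k\in\Z}|g(\cdot-ka)|\in L^1(0,a)$, which is exactly the stated integrability condition.

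Having secured this integrability, I would prove the identity for the signed (complex) integrand by repeating the same unfolding without absolute values. Now the interchange of sum and integral is licensed by Fubini's theorem, whose hypothesis is precisely the absolute integrability just verified; combined with $f(x+ka)=f(x)$ this yields
$$\int_\R f(x)g(x)\,dx=\sum_{k\in\Z}\int_0^a f(x)g(x+ka)\,dx=\int_0^a f(x)\sum_{k\in\Z}g(x-ka)\,dx,$$
again after re-indexing. The only genuine obstacle is the passage from Tonelli (the nonnegative case, giving the equivalence) to Fubini (the signed case, giving the formula): one must have the absolute convergence in hand \emph{before} exchanging order in the signed series, which is exactly why the two claims are established in this order. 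Everything else — the change of variables, the periodicity cancellation, and the re-indexing $k\mapsto-k$ — is routine.
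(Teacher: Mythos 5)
Your proposal is correct and follows essentially the same route as the paper: unfold $\R$ into the translates of $[0,a)$, use periodicity of $f$, and apply Tonelli to the nonnegative integrand to get both the unconditional interchange and the integrability equivalence. The only cosmetic difference is in the signed identity, where you invoke Fubini on $[0,a)\times\Z$ while the paper cites dominated convergence; both rest on exactly the absolute integrability established in the first step.
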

\begin{proof}
	First  we note that
	\begin{align*}
		\int_{-\infty}^\infty |f(x)g(x)|dx &= \sum_{k\in \Z}\int_0^a |f(x-ka)||g(x-ka)|dx\\
		&=\sum_{k\in \Z}\int_0^a |f(x)||g(x-ka)|dx.
	\end{align*}
	Now Tonelli's theorem ensures that 
	\begin{align*}
		\int_{-\infty}^\infty |f(x)g(x)|dx &=\int_0^a |f(x)|\sum_{k\in \Z} |g(x-ka)|dx. 
	\end{align*}
	In other words, $fg\in L^1(\R)$ if and only if $f(\cdot) \sum_{k\in \Z} |g(\cdot-ka)|\in L^1(0,a)$. \\
	The second part of the statement is obtained by Lebesgue dominated convergence theorem.
\end{proof}

Finally, in our study the {\it bracket product}
\begin{equation*}
	\left [\wh f,{\wh \varphi}\right](\gamma):=\frac{1}{a}\sum_{n\in \Z}\wh f\left(\frac{\gamma-n}{a}\right)\ol{\widehat{\varphi}\left (\frac{\gamma-n}{a}\right)}, \qquad \gamma\in [0,1]
\end{equation*}
will be particularly useful. The bracket product was first used in \cite{JM,BVR,BVR2}. 
Moreover 
$\left [\wh f,{\wh \varphi}\right]$ is a well-defined element of $L^1(0,1)$  for $f,\varphi\in L^2(\R)$, indeed $\wh f\wh \varphi \in L^1(\R)$ and 
\begin{align*}
	\int_0^1 \left|\sum_{n\in \Z}\wh f\left(\frac{\gamma-n}{a}\right)\ol{\widehat{\varphi}\left (\frac{\gamma-n}{a}\right)}\right| d\gamma &\leq \int_0^1 \sum_{n\in \Z}\left|\wh f\left(\frac{\gamma-n}{a}\right)\ol{\widehat{\varphi}\left (\frac{\gamma-n}{a}\right)}\right| d\gamma  \\
	&= \int_{-\infty}^\infty \left|\wh f\left (\frac{\gamma}{a}\right)\ol{\wh \varphi\left (\frac{\gamma}{a}\right)}\right|d\gamma< \infty
\end{align*}
where the last equality is by Lemma \ref{lem_period}. 
We define the operator $\Gamma_\varphi:\D(\Gamma_\varphi)\subseteq L^2(\R)\to L^2(0,1)$ where
\begin{equation}
	\label{Gamma}
	\D(\Gamma_\varphi)=\left \{f\in L^2(\R):\left [\wh f,{\wh \varphi}\right]\in L^2(0,1) \right \} 
	\quad\text{ and }\quad \Gamma_\varphi f=\left [\wh f,{\wh \varphi}\right].
\end{equation}

We are now ready to determine the operators $C_\varphi$ and $T_\varphi$. We write $e_n(\gamma)=e^{2 \pi i n \gamma}$ for any $n\in \Z$ and $\gamma\in (0,1)$.

\begin{theo}
	\label{th_shift}
	Let $\varphi \in L^2(\R)$ and $a>0$. Let $C_\varphi$ and $T_\varphi$ be the analysis and the generalized frame operators of $\mathcal{T}(\varphi,a)$, respectively. The following statements hold.
	\begin{enumerate}
		\item The operator $C_\varphi$ has domain $\D(C_\varphi)=\D(\Gamma_\varphi)$ and $C_\varphi f=\{\pin{ [\wh f,{\wh \varphi}]}{e_{-n}}\}_{n\in \Z}$ for all $f\in \D(C_\varphi)$. The domain $\D(C_\varphi)$ contains all functions whose Fourier transform is bounded and compactly supported, thus $\D(C_\varphi)$ is dense.
		\item The adjoint $C_\varphi^*$ has domain $$\D(C_\varphi^*)=\left \{\{c_n\}\in \ell^2(\Z): \int_{-\infty}^{\infty}\left|\sum_{n\in \Z}c_ne^{2\pi i na\gamma}\wh \varphi(\gamma)\right|^2d\gamma <\infty\right \}$$  and it is given by $\wh {C_\varphi^*\{c_n\}}(\gamma)=\sum_{n\in \Z}c_ne^{2\pi i na\gamma}\wh \varphi(\gamma)$ for $\{c_n\}\in \D(C_\varphi^*)$ and a.e. $\gamma\in \R$.
		\item The domain $\D(T_\varphi)$ of $T_\varphi$ is the subspace of  $f\in L^2(\R)$ such that
		\begin{enumerate}
			\item[\emph{(a)}] $\left [\wh f,{\wh \varphi}\right]\in L^2(0,1)$;
			\item[\emph{(b)}] 
			the function from $\R$ to $\C$ given by $\gamma\mapsto\widehat{\varphi} (\gamma)\sum_{n\in \Z}\wh f\left(\gamma-\frac{n}{a}\right)\ol{\widehat{\varphi}\left (\gamma-\frac{n}{a}\right)}$ belongs to $L^2(\R)$,
		\end{enumerate}
		and 
		$$\wh {T_\varphi f}(\gamma)=\frac{1}{a}\widehat{\varphi} (\gamma)\sum_{n\in \Z}\wh f\left(\gamma-\frac{n}{a}\right)\ol{\widehat{\varphi}\left (\gamma-\frac{n}{a}\right)}
		,$$
		for $f\in \D(T_\varphi)$ and a.e. $\gamma \in \R$. 
	\end{enumerate}
\end{theo}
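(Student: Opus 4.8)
The plan is to derive all three parts from a single integral identity, obtaining (iii) essentially for free from (i)--(ii) via the factorization $T_\varphi=C_\varphi^\times C_\varphi$ recorded in Section \ref{secSeq_1seq}.

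\textbf{Part (i).} First I would establish the identity $\pin{f}{\varphi_n}=\pin{[\wh f,\wh\varphi]}{e_{-n}}_{L^2(0,1)}$ for every $f\in L^2(\R)$ and $n\in\Z$. Starting from Plancherel and $\wh{\varphi_n}(\gamma)=\wh\varphi(\gamma)e^{-2\pi i na\gamma}$, one has $\pin{f}{\varphi_n}=\int_\R \wh f(\gamma)\ol{\wh\varphi(\gamma)}e^{2\pi i na\gamma}\,d\gamma$; applying Lemma \ref{lem_period} with period $1$ to the $1$-periodic factor $e^{2\pi i n\gamma}$ and to $G(\gamma)=\frac1a\wh f(\gamma/a)\ol{\wh\varphi(\gamma/a)}$, whose $1$-periodization is exactly $[\wh f,\wh\varphi]$, and then substituting $\gamma\mapsto a\gamma$, yields the claim (the integrability hypothesis is guaranteed by $\wh f\wh\varphi\in L^1(\R)$, already noted for the bracket product). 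Since $\{e_{-n}\}_{n\in\Z}$ is an orthonormal basis of $L^2(0,1)$, Parseval gives $\sum_n|\pin{f}{\varphi_n}|^2=\n{[\wh f,\wh\varphi]}^2_{L^2(0,1)}$, so $f\in\D(C_\varphi)$ iff $[\wh f,\wh\varphi]\in L^2(0,1)$, i.e. $\D(C_\varphi)=\D(\Gamma_\varphi)$, and the coordinates of $C_\varphi f$ are the stated Fourier coefficients. For density I would check that any $f$ with $\wh f$ bounded and compactly supported lies in $\D(\Gamma_\varphi)$: only finitely many translates contribute at each $\gamma$, so a Cauchy--Schwarz bound reduces the $L^2(0,1)$-norm of $[\wh f,\wh\varphi]$ to a multiple of $\n\varphi^2$, and such $f$ are dense in $L^2(\R)$.

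\textbf{Part (ii).} Since $\D(C_\varphi)$ is dense, $C_\varphi^*$ exists. For $\{c_n\}\in\ell^2(\Z)$ and $f$ in the dense set above I would compute $\pin{C_\varphi f}{\{c_n\}}_{\ell^2}$, interchange the (first finite, then limiting) sum with the integral, and run the periodization backwards to get $\pin{C_\varphi f}{\{c_n\}}_{\ell^2}=\pin{\wh f}{\wh\varphi\,m_c}_{L^2(\R)}$, where $m_c(\gamma)=\sum_n c_n e^{2\pi i na\gamma}$ (the sign of the exponent being fixed by the convention $\varphi_n=\varphi(\cdot-na)$ together with the harmless reindexing $n\mapsto -n$ in $\ell^2(\Z)$). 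The candidate $\wh\varphi\,m_c$ is always in $L^1_{loc}(\R)$ (product of $\wh\varphi\in L^2$ with a locally $L^2$ periodic factor), so a duality argument on the dense set shows that the functional $f\mapsto\pin{C_\varphi f}{\{c_n\}}$ is $\nor$-bounded iff $\wh\varphi\,m_c\in L^2(\R)$; this is exactly the displayed domain, and then $\wh{C_\varphi^*\{c_n\}}=\wh\varphi\,m_c$. Equivalently, I could identify $C_\varphi^*$ with the closure of $D_\varphi$ through Proposition \ref{pro_oper_CD} and simply read off $\wh{D_\varphi\{c_n\}}=\wh\varphi\sum_n c_n e^{-2\pi i na\gamma}$.

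\textbf{Part (iii) and the main obstacle.} As recorded in Section \ref{secSeq_1seq}, $T_\varphi=C_\varphi^\times C_\varphi$, and since $\D(C_\varphi)$ is dense we have $\H_\varphi=L^2(\R)$ and $C_\varphi^\times=C_\varphi^*$, so $T_\varphi=C_\varphi^*C_\varphi$; hence $f\in\D(T_\varphi)$ iff $f\in\D(C_\varphi)$ and $C_\varphi f\in\D(C_\varphi^*)$. Condition (a) is precisely $f\in\D(C_\varphi)=\D(\Gamma_\varphi)$. The decisive observation is that for $c=C_\varphi f$ the synthesis function of part (ii) is the bracket product itself, $m_c=[\wh f,\wh\varphi]$, because by part (i) the $c_n$ are exactly the Fourier coefficients of $[\wh f,\wh\varphi]$; thus $C_\varphi f\in\D(C_\varphi^*)$ becomes $\wh\varphi(\gamma)[\wh f,\wh\varphi](a\gamma)\in L^2(\R)$, which after writing $[\wh f,\wh\varphi](a\gamma)=\frac1a\sum_n\wh f(\gamma-\tfrac na)\ol{\wh\varphi(\gamma-\tfrac na)}$ is exactly condition (b), and then $\wh{T_\varphi f}=\wh\varphi\cdot m_c(a\cdot)$ is the asserted Walnut representation. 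The routine steps (Plancherel, the two uses of Lemma \ref{lem_period}, the Fubini/Tonelli interchanges) are harmless; the one genuinely delicate point is the duality step in (ii), where one must argue that $\nor$-boundedness of the functional forces the a priori only locally integrable $\wh\varphi\,m_c$ to be a bona fide $L^2(\R)$ function equal to $\wh{C_\varphi^*\{c_n\}}$. Doing this cleanly (via the density argument, or via $C_\varphi^*=\ol{D_\varphi}$) is where the care lies; everything else is bookkeeping that threads (i) and (ii) together.
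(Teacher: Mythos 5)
Your proposal is correct and follows essentially the same route as the paper: part (i) via Plancherel and the periodization lemma (Lemma \ref{lem_period}) followed by Parseval, part (ii) by computing the adjoint against the dense domain, and part (iii) by reading $\D(T_\varphi)$ off the factorization $T_\varphi=C_\varphi^{*}C_\varphi$ after identifying the periodic extension of $[\wh f,\wh\varphi]$ as the synthesis symbol of $C_\varphi f$. The only cosmetic difference is that the paper channels (ii)--(iii) through the auxiliary operator $\Gamma_\varphi$ and a unitary $J:L^2(0,1)\to\ell^2(\Z)$, computing $\Gamma_\varphi^{*}$ and observing that $C_\varphi^{*}C_\varphi$ and $\Gamma_\varphi^{*}\Gamma_\varphi$ are associated to the same sesquilinear form, and the ``delicate'' duality step you flag in (ii) is resolved there by exactly the density argument you propose.
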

\begin{proof}
	\begin{enumerate}
		\item[(i)] 
		We have by Lemma \ref{lem_period} 
		\begin{align*}
			\pin{f}{\varphi_n}=\pin{\widehat f}{\wh{\varphi_n}}&=\int_{-\infty}^\infty \wh f(\gamma)\ol{\wh\varphi(\gamma)}e^{2 \pi i n a\gamma}d\gamma\\
			&
			=\frac{1}{a}\int_{-\infty}^\infty \wh f\left (\frac{\gamma}{a}\right)\ol{\wh\varphi\left (\frac{\gamma}{a}\right)}e^{2 \pi i n\gamma}d\gamma\\
			&=\frac{1}{a}\int_0^1 \sum_{n\in \Z}\wh f\left(\frac{\gamma-n}{a}\right)\ol{\widehat{\varphi}\left (\frac{\gamma-n}{a}\right)}e^{2 \pi i n \gamma} d\gamma\\
			&=\left\langle\left [\wh f,{\wh \varphi}\right],e_{-n} \right\rangle.
		\end{align*} 
		Therefore $\{\pin{f}{\varphi_n}\}\in \ell^2(\Z)$ if and only if $\left [\wh f,{\wh \varphi}\right] \in L^2(0,1)$.  The second part of the statement follows now easily.		 
		
		\item[(ii)] We have $C_\varphi=C_{\{e_n\}}\Gamma_\varphi$, where $C_{\{e_n\}}:L^2(0,1)\to \ell^2(\Z)$ is the analysis operator of $\{e_n\}$. Since $C_\varphi^*=\Gamma_\varphi^*C_{\{e_n\}}^*=\Gamma_\varphi^*C_{\{e_n\}}^{-1}$, the statement follows immediately after we determine $\Gamma_\varphi^*$. To do this, let $h\in \D(\Gamma_\varphi^*)\subseteq L^2(0,1)$, then for any $f\in \D(\Gamma_\varphi)$ 
		$$
		\pin{\Gamma_\varphi^* h}{f}=\pin{h}{\Gamma_\varphi f}=\frac{1}{a}\int_0^1 h(\gamma)\sum_{n\in \Z}\widehat{\varphi}\left (\frac{\gamma-n}{a}\right)\ol{\wh f\left(\frac{\gamma-n}{a}\right)} d\gamma.
		$$
		We write for $\widetilde{h}$ the periodic extension of $h$ on $\R$  and then we can apply Lemma \ref{lem_period} because $h,\Gamma_\varphi f\in L^2(0,1)$, so
		\begin{equation}
			\label{eq1}
			\pin{\Gamma_\varphi^* h}{f}=\frac{1}{a}\int_{-\infty}^\infty \widetilde{h}(\gamma)\widehat{\varphi}\left (\frac{\gamma}{a}\right)\ol{\wh f\left(\frac{\gamma}{a}\right)} d\gamma=\int_{-\infty}^\infty \widetilde{h}(a\gamma)\widehat{\varphi} (\gamma) \ol{\wh f(\gamma)}d\gamma.
		\end{equation}
		Since $\D(\Gamma_\varphi)$ is dense, by \eqref{eq1} it follows that $\widetilde{h}(a\;\!\!\cdot)\widehat{\varphi} (\cdot): \gamma\mapsto \widetilde{h}(a\gamma)\widehat{\varphi} (\gamma)$ is an element of $L^2(\R)$ and that $\wh{\Gamma_\varphi^*h}(\gamma)=\widetilde{h}(a\gamma)\widehat{\varphi} (\gamma)$ for a.e. $\gamma\in \R$. \\
		On the other hand, if $h\in L^2(0,1)$, $\widetilde{h}$ is the periodic extension of $h$ to $\R$ and $\widetilde{h}(a\;\!\cdot)\widehat{\varphi} (\cdot)\in L^2(\R)$, then with the inverse steps of above we find that $h\in \D(\Gamma_\varphi^*)$ and
		$\wh{\Gamma_\varphi^*h}(\gamma)=\widetilde{h}(a\gamma)\widehat{\varphi} (\gamma)$
		for a.e. $\gamma\in \R$. The operator $\Gamma_\varphi^*$ is now determined; summarizing 
		\begin{equation}
			\label{Gamma*}
			\D(\Gamma_\varphi^*)=\{h\in L^2(0,1):\widetilde{h}(a\;\!\cdot)\widehat{\varphi} (\cdot)\in L^2(\R)\} \;\text{ and }\;\wh{\Gamma_\varphi^*h}(\gamma)=\widetilde{h}(a\gamma)\widehat{\varphi} (\gamma) \;\text{ for a.e. }\gamma \in \R.
		\end{equation} 
		Coming back to $C_\varphi^*=\Gamma_\varphi^*C_{\{e_n\}}^*$ we can say that \begin{align*}
			\D(C_\varphi^*)&=\D(\Gamma_\varphi^*C_{\{e_n\}}^*)=\{\{c_n\}\in \ell^2(\Z):C_{\{e_n\}}^* \{c_n\} \in \D(\Gamma_\varphi^*)\}\\
			&=\left \{\{c_n\}\in \ell^2(\Z):\int_{-\infty}^{\infty}|\sum_{n\in \Z}c_ne^{2\pi i na\gamma}\wh \varphi(\gamma)|^2d\gamma <\infty\right \}
		\end{align*}
		and $\wh {C_\varphi^*\{c_n\}}(\gamma)=\wh {\Gamma_\varphi^*C_{\{e_n\}}^*\{c_n\}}(\gamma)=\sum_{n\in \Z}c_ne^{2\pi i na\gamma}\wh \varphi(\gamma)$ for $\{c_n\}\in \D(C_\varphi^*)$ and a.e. $\gamma\in \R$.
		
		\item[(iii)] Since $C_\varphi$ is densely defined we have $C_\varphi^\times=C_\varphi^*$; so  $T_\varphi=C_\varphi^*C_\varphi$, by \eqref{T=C^*C}.  Moreover, since $\n{C_\varphi f}_2=\n{\Gamma_\varphi f}$ for all $f\in \D(C_\varphi)=\D(\Gamma_\varphi)$, by the polarization identity we have 
		$
		\pin{C_\varphi f}{C_\varphi g}_2=\pin{\Gamma_\varphi f}{\Gamma_\varphi g}
		$ for all $f,g\in \D(C_\varphi)=\D(\Gamma_\varphi)$.  It follows that the operators $T_\varphi=C_\varphi^*C_\varphi$ and $\Gamma_\varphi^*\Gamma_\varphi $ are associated to the same sesquilinear form, thus coincide. Hence, the description of $\D(T_\varphi)=\D(C_\varphi^*C_\varphi)= \D(\Gamma_\varphi^*\Gamma_\varphi)= \{f\in \D(\Gamma_\varphi): \Gamma_\varphi f\in \D(\Gamma_\varphi^*)\}$ is simply given by \eqref{Gamma} and by \eqref{Gamma*}. In particular, the function ${h}=\Gamma_\varphi f=\left [\wh f,{\wh \varphi}\right]$ (i.e., the function in (a)) has periodic extension $\widetilde{h}$ on $\R$ given by $\gamma \mapsto\sum_{n\in \Z}\wh f\left(\frac{\gamma-n}{a}\right)\ol{\widehat{\varphi}\left (\frac{\gamma-n}{a}\right)}$ and $\widehat{\Gamma_\varphi^* h}$ is simply 
		$\gamma\mapsto \widehat{\varphi} (\gamma)\widetilde{h}(a\gamma)$ (i.e., the function in (b)). With these observations, for $f\in \D(C_\varphi^*C_\varphi)$
		\[
		\wh{T_\varphi f}(\gamma)= \wh{C_\varphi^*C_\varphi f}(\gamma)=\wh{\Gamma_\varphi^*\Gamma_\varphi f}(\gamma)=\frac{1}{a}\widehat{\varphi} (\gamma)\sum_{n\in \Z}\wh f\left(\gamma-\frac{n}{a}\right)\ol{\widehat{\varphi}\left (\gamma-\frac{n}{a}\right)}, \qquad\forall a.e.\;\gamma \in \R. \qedhere
		\]
	\end{enumerate}
\end{proof}

We are now interested to discuss about lower semi-frames in the context of sequences of translates. By \cite[Exercise 10.18]{heil} for any $\varphi\in L^2(\R)$ and $a>0$ the sequence $\mathcal{T}(\varphi,a)$ is not complete in $L^2(\R)$. Consequently, $\mathcal{T}(\varphi,a)$ is never a lower semi-frame of $L^2(\R)$ (this follows also by the more general result in \cite[Theorem 1.2]{Ole_density}). 
However, we can restrict the space to the closed linear span  $\langle \varphi \rangle:= \ol{\text{span}}$($\{\varphi_n\}_{n\in\Z}$), called as usual the {\it principal shift-invariant subspace}, and study when $\mathcal{T}(\varphi,a)$ is a lower semi-frame of  $\langle \varphi \rangle$.  
It is known (for instance one can see \cite[Theorem 1.6]{hsww} and adapt the result for $a\neq 1$) that there exists a unitary operator $U:L^2([0,1),p_\varphi)\to \langle \varphi \rangle$. Here $L^2([0,1),p_\varphi)$ is the weighted $L^2$-space with measure $p_\varphi(\gamma)d\gamma$; the inner product of $L^2([0,1),p_\varphi)$ is then 
$$\pin{f}{g}_{p_\varphi}=\int_0^1 f(\gamma)\ol{g(\gamma)}p_\varphi(\gamma)d\gamma, \qquad\forall f,g\in L^2([0,1),p_\varphi).$$ 
More precisely,  $Uh= (h\wh \varphi){\;\!\widecheck{}}$, where $\wch{f}$ is the inverse of the Fourier transform of $f\in L^2(\R)$. Therefore
$U^{-1}\varphi_{-n}=e_{n}$, for all $n\in \Z$, where $e_n(\gamma)=e^{2\pi i n \gamma}$, as usual. 
For this reason it is sufficient to analyze the problem in $L^2([0,1),p_\varphi)$ for the sequence $\{e_n\}_{n\in \Z}$. 

\begin{theo}
	\label{pro_lower_transl}
	A sequence of translates $\mathcal{T}(\varphi,a)$ is a lower semi-frame of its closed span if and only if $p_\varphi$ is  bounded away from zero a.e. in $Z_\varphi:=\{\gamma\in [0,1):p_\varphi(\gamma)>0\}$, i.e. there exists $A>0$ such that $p_\varphi(\gamma)\geq A$ for a.e. $\gamma \in Z_\varphi$. \\
	In this case, the canonical Bessel dual of $\mathcal{T}(\varphi,a)$ is $\mathcal{T}(( \wh \varphi \chi_{Z_\varphi}/p_\varphi){\;\!\widecheck{}\;},a)$, with meaning $$ (\wh \varphi\chi_{Z_\varphi}/p_\varphi)(\gamma):=\wh \varphi(\gamma)/p_\varphi(\gamma)\text{ if }\gamma\in Z_\varphi\text{ and } (\wh \varphi\chi_{Z_\varphi}/p_\varphi)(\gamma):=0 \text{ if }\gamma\notin Z_\varphi.$$
\end{theo}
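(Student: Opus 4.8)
The plan is to transport the problem, through the unitary $U:L^2([0,1),p_\varphi)\to \langle \varphi \rangle$ recalled before the statement, onto the weighted space $K:=L^2([0,1),p_\varphi)$, where $\mathcal{T}(\varphi,a)$ corresponds (up to reordering, harmless since $T_\varphi$ is unconditionally defined) to the exponential system $\{e_n\}_{n\in\Z}$, because $Ue_n=\varphi_{-n}$. Writing $\mu:=p_\varphi\,d\gamma$, a finite measure on $[0,1)$ of total mass $\int_0^1 p_\varphi=\n{\varphi}^2$ by Lemma \ref{lem_period}, the basic computation is that for $f=Uh$ one has $\pin{f}{\varphi_{-n}}=\pin{h}{e_n}_{p_\varphi}=\int_0^1 (hp_\varphi)(\gamma)e^{-2\pi i n\gamma}\,d\gamma$, i.e.\ the $n$-th Fourier coefficient of $hp_\varphi\in L^1(0,1)$. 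Hence, by Parseval (with the value $+\infty$ when $hp_\varphi\notin L^2$),
\[
\sum_{n\in\Z}|\pin{f}{\varphi_n}|^2=\n{hp_\varphi}_{L^2(0,1)}^2=\int_0^1 |h|^2 p_\varphi^2\,d\gamma,\qquad \n{f}^2=\int_0^1 |h|^2 p_\varphi\,d\gamma.
\]
I would also record that $\D(C_\varphi)$ is dense in $\langle \varphi \rangle$: the truncations $h\chi_{\{p_\varphi\le N\}}$ lie in $\D(C_\varphi)$ and converge to $h$ in $K$, so $\ol{\D(C_\varphi)}=\langle \varphi \rangle$ and the projection $P$ appearing in the canonical dual is the identity.

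With these two identities the characterization is immediate and does not even need Proposition \ref{car_seq_T_xi}: $\mathcal{T}(\varphi,a)$ is a lower semi-frame of $\langle \varphi \rangle$ with bound $A$ precisely when
\[
\int_0^1 |h|^2 p_\varphi^2\,d\gamma\ \ge\ A\int_0^1 |h|^2 p_\varphi\,d\gamma\qquad\text{for all }h\in K,
\]
the left-hand side being allowed to be $+\infty$. Since $\mu$ is carried by $Z_\varphi$ and is there equivalent to Lebesgue measure, this says exactly that $p_\varphi\ge A$ a.e.\ on $Z_\varphi$. The forward implication is the only non-formal point and I would prove it by contraposition: if $0<p_\varphi<A$ on a set $E\subseteq Z_\varphi$ of positive measure then $0<\mu(E)<\infty$, and testing the inequality on $h=\chi_E$ (for which $\int_E p_\varphi^2\le A\int_E p_\varphi<\infty$) yields $\int_E p_\varphi(p_\varphi-A)\ge 0$ with a strictly negative integrand, a contradiction. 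The reverse inequality is clear once $p_\varphi\ge A$ on $Z_\varphi$.

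For the canonical dual I first read off the generalized frame operator in the model. The same Parseval computation gives the form $\sum_{n}\pin{h_1}{e_n}_{p_\varphi}\ol{\pin{h_2}{e_n}_{p_\varphi}}=\int_0^1 h_1\ol{h_2}\,p_\varphi^2\,d\gamma$, and reading $\pin{Th_1}{h_2}_{p_\varphi}=\int_0^1 (Th_1)\ol{h_2}\,p_\varphi\,d\gamma$ off this forces $Th_1=p_\varphi h_1$; thus $T_\varphi$ is unitarily equivalent to the multiplication operator $M_{p_\varphi}$ on $K$. In the lower semi-frame case $M_{p_\varphi}$ is boundedly invertible with $T_\varphi^{-1}=M_{\chi_{Z_\varphi}/p_\varphi}$, so the dual of $e_n$ is $\chi_{Z_\varphi}e_n/p_\varphi$, and applying $U$ — which passes to the Fourier domain, multiplies by $\wh\varphi$, and turns the modulation $e_n$ into an integer translate — produces the translates of $\psi$, where $\wh\psi=\wh\varphi\chi_{Z_\varphi}/p_\varphi$. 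To make this transport rigorous without chasing the $\gamma\mapsto a\gamma$ rescaling built into $U$, I would verify directly, via the Walnut representation in Theorem \ref{th_shift}(iii), that $T_\varphi\psi_n=\varphi_n$. Writing $\wh{\psi_n}(\gamma)=e^{-2\pi i n a\gamma}\wh\psi(\gamma)$ and using that $\gamma\mapsto\tfrac1a\sum_k|\wh\varphi(\gamma-k/a)|^2$ and its support are $\tfrac1a$-periodic, the sum in Theorem \ref{th_shift}(iii) collapses to
\[
\wh{T_\varphi\psi_n}(\gamma)=e^{-2\pi i n a\gamma}\,\wh\varphi(\gamma)\,\chi_{Z_\varphi}(\gamma)=\wh{\varphi_n}(\gamma),
\]
the last equality because $\wh\varphi\chi_{Z_\varphi}=\wh\varphi$ (any $\gamma$ with $\wh\varphi(\gamma)\neq0$ makes $\tfrac1a\sum_k|\wh\varphi(\gamma-k/a)|^2>0$). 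One checks en route that conditions (a) and (b) of Theorem \ref{th_shift}(iii) hold for $\psi_n$, so $\psi_n\in\D(T_\varphi)$; injectivity of $T_\varphi$ then gives $\psi_n=T_\varphi^{-1}\varphi_n$, which with $P=I$ identifies $\mathcal{T}(\psi,a)$ as the canonical Bessel dual.

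The work is essentially bookkeeping, and I expect the two delicate points to be the following. First, one must carry the possibility $\sum_{n}|\pin{f}{\varphi_n}|^2=+\infty$ through the whole argument and check that the lower bound still reduces cleanly to $p_\varphi\ge A$ on $Z_\varphi$; finiteness of $\mu$ is exactly what makes both the test-function step and the density of $\D(C_\varphi)$ go through. Second, the passage back through $U$ hides the rescaling relating $p_\varphi$ and $\chi_{Z_\varphi}$ on $[0,1)$ to their $\tfrac1a$-periodic analogues on $\R$, and it is precisely this identification that the direct verification $T_\varphi\psi_n=\varphi_n$ is meant to bypass; I would therefore present that Walnut computation as the rigorous core of the dual claim, keeping the model picture as motivation.
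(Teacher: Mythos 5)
Your proof is correct and follows essentially the same route as the paper's: transporting the problem to $L^2([0,1),p_\varphi)$ via $U$, computing $\sum_{n}|\pin{f}{\varphi_n}|^2=\int_0^1|h|^2p_\varphi^2\,d\gamma$ against $\n{f}^2=\int_0^1|h|^2p_\varphi\,d\gamma$, reducing the lower-bound condition to $p_\varphi\ge A$ a.e.\ on $Z_\varphi$, and identifying the generalized frame operator with multiplication by $p_\varphi$ to read off the canonical dual. Your explicit contraposition with $h=\chi_E$ for the measure-theoretic step, the density remark giving $P=I$, and the direct Walnut-representation verification that $T_\varphi\psi_n=\varphi_n$ simply make rigorous what the paper labels ``standard'' or leaves implicit in the passage back through $U$.
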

\begin{proof}
	With the consideration above, we prove the statement for $\{e_n\}_{n\in \Z}$ in the Hilbert space $L^2([0,1),p_\varphi)$. 
	First of all we determine the analysis operator $C$ of $\{e_n\}_{n\in \Z}$ in $L^2([0,1),p_\varphi)$. For any $f\in L^2([0,1),p_\varphi)$ we have that $fp_\varphi\in L^1[0,1)$; indeed, by Cauchy-Schwarz inequality and by the fact that $p_\varphi \in L^1[0,1)$ \cite[Sect. 9.2]{Chris},
	$$
	\int_0^1 |f(\gamma)p_\varphi(\gamma)| d\gamma=\int_0^1 |f(\gamma)|p_\varphi(\gamma)^\mez p_\varphi(\gamma)^\mez d\gamma\leq \left (\int_0^1 |f(\gamma)|^2p_\varphi(\gamma)d\gamma \right)^\mez \left (\int_0^1 p_\varphi(\gamma)d\gamma \right)^\mez.
	$$
	Thus, $f\in \D(C)$ if and only if 
	$\sum_{n\in \Z} |\pin{f}{e_n}_{p_\varphi}|^2=\sum_{n\in \Z} |\pin{fp_\varphi}{e_n}|^2<\infty$, i.e. if and only if $fp_\varphi \in L^2[0,1)$. 
	Moreover, for $f\in \D(C)$ we have $Cf=\{\pin{fp_\varphi}{e_n}\}$ and 
	\begin{equation}
		\label{eq_4}
		\pin{Cf}{Cf}_{2}=\int_0^1 |f(\gamma)|^2p_\varphi(\gamma)^2d\gamma.
	\end{equation}
	Hence, $\{e_n\}_{n\in \Z}$ is a lower semi-frame of $L^2([0,1),p_\varphi)$ if and only if there exists $A>0$ such that 
	$$
	A\pin{f}{f}_{p_\varphi}=A\int_0^1 |f(\gamma)|^2p_\varphi(\gamma)d\gamma\leq \int_0^1 |f(\gamma)|^2p_\varphi(\gamma)^2d\gamma, \qquad \forall f\in \D(C),
	$$
	or equivalently,
	\begin{equation}
		\label{dis_3}
		A\int_{Z_\varphi} |f(\gamma)|^2p_\varphi(\gamma)d\gamma\leq \int_{Z_\varphi} |f(\gamma)|^2p_\varphi(\gamma)^2d\gamma, \qquad \forall f\in \D(C).
	\end{equation}
	Now, as consequence of Problem III.2.10 and Example III.2.11 of \cite{Kato}, the inequality \eqref{dis_3} is satisfied if and only if $p_\varphi$ is bounded below by $A$ a.e. in $Z_\varphi$.\\
	For the second part of the statement, let $T$ be the generalized frame operator of $\{e_n\}_{n\in \Z}$. From \eqref{eq_4}, 
	we have, for every $f\in \D(C)$,
	$$
	\pin{Tf}{f}_{p_\varphi}=\pin{Cf}{Cf}_{2}=\int_0^1 |f(\gamma)|^2p_\varphi(\gamma)^2d\gamma=\int_0^1 f(\gamma)p_\varphi(\gamma)\overline{f(\gamma)}p_\varphi(\gamma)d\gamma=\pin{fp_\varphi}{f}_{p_\varphi}.
	$$
	Therefore, $T$ acts as $Tf=fp_\varphi$ on the domain $\D(T)=\{f\in L^2([0,1),p_\varphi): fp_\varphi\in L^2([0,1),p_\varphi)\}$. Consequently, the canonical dual of a lower semi-frame $\{e_n\}_{n\in \Z}$ is $\{\chi_{Z_\varphi}/p_\varphi e_n\}_{n\in \Z}$. Making use of the operator $U$, we finally find the expression of the canonical dual of a lower semi-frame  $\mathcal{T}(\varphi,a)$.
\end{proof}

This result is then the counterpart of the characterization of Bessel sequences of translates in terms of $p_\varphi$. 
Under the assumptions of Theorem \ref{pro_lower_transl} we have the unconditionally convergent reconstruction formula \eqref{weak_rec_1seq(b)} (with $\N$ replaced by $\Z$) 
$$
f=\sum_{n\in \Z} \pin{f}{\varphi_n}\psi_n, \quad \text{ for all  } f\in \langle\varphi\rangle \text{ such that } \left [\wh f,{\wh \varphi}\right]\in L^2(0,1)
$$
where $\psi=\wh \varphi \chi_{Z_\varphi}/p_\varphi$, $\varphi_n(x):=\varphi(x-na)$ and $\psi_n(x):=\psi(x-na)$. 

For a frame  $\mathcal{T}(\varphi,a)$ the expression of the canonical dual is already known \cite[Proposition 4.7, Theorem 4.8]{Ben_Li}. 

The Fourier transform changes translations into modulations. Thus we can also describe the generalized frame operator of the {\it weighted exponentials sequence} $\mathcal{E}(g,b):=\{g_n\}_{n\in \Z}$ of a function $g\in L^2(0,1)$ and parameter $b>0$, where $g_n(x)=e^{2\pi i nb x}g(x)$ for $0<x<1$ (we will often refer to detailed studies in \cite{heil}). Indeed, we can think $g$ as an element of $L^2(\R)$ where $g$ is zero outside $(0,1)$ and transform $\mathcal{E}(g,b)$ into $\mathcal{T}(\wch{g},b)$. Hence the next result can be seen as consequence of Theorem \ref{th_shift}. Again, it is suitable to use the bracket product $[f,h]$ of two elements $f,h\in L^2(0,1)$. We write
\begin{equation}
	\label{eq_bracket2}
	\left [ f,h\right](x):=\frac{1}{b}\sum_{n\in \Z} f\left(\frac{x-n}{b}\right){\ol{h\left (\frac{x-n}{b}\right)}}, \qquad x\in (0,1),
\end{equation}
with the meaning that $f$ and $h$ are zero outside $(0,1)$. So \eqref{eq_bracket2} is actually a {\it finite} sum, but for simplicity of notation we write an infinite sum in \eqref{eq_bracket2} and in the theorem below.

\begin{cor}
	\label{th_weigh_exp} 
	Let $g\in L^2(0,1)$ and $b>0$. Let $C_g$ and $T_g$ be the analysis and generalized frame operators of $\mathcal{E}(g,b):=\{g_n\}_{n\in \Z}$, respectively. The following statements hold.
	\begin{enumerate}
		\item The operator $C_g$ has domain $\D(C_g)=\{f\in L^2(0,1):[f,{g}]\in L^2(0,1)\}$ and $C_g f=\{\pin{[f,{g}] }{e_n}\}_{n\in \Z}$ for all $f\in \D(C_g)$. The domain $\D(C_g)$ contains all bounded functions with compact support in $(0,1)$, thus $\D(C_g)$ is dense.
		\item The domain $\D(T_g)$ of $T_g$ is the subspace of  $f\in L^2(0,1)$ such that
		\begin{enumerate}
			\item[\emph{(a)}] $\left [ f,{ g}\right]\in L^2(0,1)$;
			\item[\emph{(b)}] 
			the function from $(0,1)$ to $\C$ given by $x\mapsto{g} (x)\sum_{n\in \Z} f\left(x-\frac{n}{b}\right)\ol{{g}\left (x-\frac{n}{b}\right)}$ belongs to $L^2(0,1)$,
		\end{enumerate}
		and 
		\begin{equation}
			\label{eq_T_g_1}
			{T_g f}(x)=\frac{1}{b}{g} (x)\sum_{n\in \Z} f\left(x-\frac{n}{b}\right)\ol{{g}\left (x-\frac{n}{b}\right)},
		\end{equation}
		for a.e. $x \in (0,1)$ and $f\in \D(T_g)$.
	\end{enumerate}
\end{cor}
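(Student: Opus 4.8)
The plan is to deduce the corollary from Theorem \ref{th_shift} by passing to the Fourier side, as announced in the paragraph preceding the statement. I regard $g$ as an element of $L^2(\R)$ vanishing outside $(0,1)$ and set $\varphi:=\wch g$, so that $\wh\varphi=g$ and $\varphi$ has Fourier transform supported in $(0,1)$; I then compare $\mathcal E(g,b)$ with the sequence of translates $\mathcal T(\varphi,b)$ (parameter $a=b$). The link is the identity $g_n=\mathcal F\varphi_{-n}$, which I would verify from the interchange relations $\mathcal F\mathcal T_y=\mathcal M_{-y}\mathcal F$ and $\mathcal F\mathcal M_y=\mathcal T_y\mathcal F$: indeed $\mathcal F\varphi_{-n}=\mathcal F\mathcal T_{-nb}\varphi=\mathcal M_{nb}\wh\varphi=\mathcal M_{nb}g=g_n$. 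Thus the unitary $\mathcal F$ carries $\mathcal T(\varphi,b)$ onto $\mathcal E(g,b)$ up to the relabelling $n\mapsto-n$, and this relabelling is immaterial since $T_\xi$ is unconditionally defined.

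For part (i) I would compute the analysis coefficients directly. For $f\in L^2(0,1)$, writing $u:=\mathcal F^{-1}f$ (so $\wh u=f$) and using that $f,g_n$ are supported in $(0,1)$,
\[
\pin{f}{g_n}=\pin{\mathcal F^{-1}f}{\mathcal F^{-1}g_n}=\pin{u}{\varphi_{-n}},
\]
and Theorem \ref{th_shift}(i) gives $\pin{u}{\varphi_{m}}=\pin{[\wh u,\wh\varphi]}{e_{-m}}$. Taking $m=-n$ and noting that $[\wh u,\wh\varphi]=[f,g]$ is exactly the bracket product \eqref{eq_bracket2} (because $\wh u=f$, $\wh\varphi=g$ and $a=b$), this yields $C_gf=\{\pin{[f,g]}{e_n}\}_{n\in\Z}$ and $\D(C_g)=\{f:[f,g]\in L^2(0,1)\}$, which is (i). For part (iii) I would transfer $T_g$ by unitary equivalence: since $\mathcal F$ is unitary, $\t_{\mathcal E(g,b)}(f,f')=\t_{\mathcal T(\varphi,b)}(\mathcal F^{-1}f,\mathcal F^{-1}f')$, so the associated positive self-adjoint operators satisfy $T_g=\mathcal F\,T_\varphi\,\mathcal F^{-1}$. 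Hence $T_gf=\mathcal F(T_\varphi u)=\wh{T_\varphi u}$, and substituting $\wh u=f$, $\wh\varphi=g$, $a=b$ into the Walnut representation of Theorem \ref{th_shift}(iii) produces precisely \eqref{eq_T_g_1}, with conditions (a)--(b) inherited verbatim.

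The computations are routine once the dictionary is set, so the only genuine care lies in the bookkeeping between $L^2(0,1)$ and $L^2(\R)$. The points I would check are: that inner products and Fourier transforms computed after extending $f$ and $g$ by zero agree with those meant in the statement; that the factor $g$ in \eqref{eq_T_g_1} forces $\wh{T_\varphi u}$, hence $T_gf$, to be supported in $(0,1)$, so that $T_g$ is legitimately an operator on (a subspace of) $L^2(0,1)$ and the identity $T_g=\mathcal F T_\varphi\mathcal F^{-1}$ is consistent with the two ambient spaces; and that the $b$-periodized sums collapse to the finite sums announced after \eqref{eq_bracket2}. If one prefers to avoid this transfer altogether, the same conclusion follows by re-running the proof of Theorem \ref{th_shift} with $\R$ and the translations $\gamma\mapsto\gamma-n/a$ replaced by the periodic setting on $(0,1)$; the support of $g$ is what guarantees that the two approaches agree, and this is precisely why the result is only a corollary.
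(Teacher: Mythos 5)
Your proposal is correct and follows exactly the route the paper intends: the paper gives no separate proof, only the remark before the corollary that one should view $g$ as an element of $L^2(\R)$ vanishing outside $(0,1)$ and identify $\mathcal{E}(g,b)$ with $\mathcal{T}(\wch{g},b)$ via the Fourier transform, which is precisely your dictionary $\varphi=\wch g$, $g_n=\mathcal F\varphi_{-n}$. You also correctly flag the only delicate points (the harmless reindexing $n\mapsto -n$, and the fact that the factor $g$ keeps everything supported in $(0,1)$ so the two ambient spaces match), so nothing is missing.
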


As a particular case, for $0<b\leq1$ we get the following.

\begin{cor}
	\label{cor_exp_b<=1}
	Let $g\in L^2(0,1)$ and $0<b\leq 1$. Let $C_g$ and $T_g$ be the analysis and generalized frame operators of $\mathcal{E}(g,b):=\{g_n\}_{n\in \Z}$, respectively. The following statements hold.
	\begin{enumerate}
		\item The operator $C_g$ has domain $\D(C_g)=\{f\in L^2(0,1):fg\in L^2(0,1)\}$ and $C_gf=\{\frac{1}{b}\pin{f\ol{g}}{e_n}\}$ for $f\in \D(C_g)$. 
		\item The operator $T_g$ is the multiplication operator by $\frac{1}{b}|g|^2$, i.e.
		$\D(T_g)=\{f\in L^2(0,1):f|g|^2\in L^2(0,1)\}$ and $T_gf=\frac{1}{b}f|g|^2$ for $f\in \D(T_g)$. 
	\end{enumerate}
\end{cor}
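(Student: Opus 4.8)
The plan is to obtain this corollary as a direct specialization of Corollary~\ref{th_weigh_exp}, exploiting the single feature that distinguishes the regime $0<b\le1$: the series defining the bracket product in \eqref{eq_bracket2} and the one defining $T_g$ in Corollary~\ref{th_weigh_exp}(iii) each collapse to a single nonzero term. Throughout one keeps in mind that $f$ and $g$ vanish outside $(0,1)$ and that $1/b\ge1$.

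First I would settle the index bookkeeping, which is elementary but must be done carefully. For $x\in(0,1)$ the summand $f\bigl(\tfrac{x-n}{b}\bigr)\ol{g\bigl(\tfrac{x-n}{b}\bigr)}$ in $[f,g](x)$ is nonzero only when $0<\tfrac{x-n}{b}<1$, i.e.\ $x-b<n<x$; since $0<x<1$ and $0<b\le1$ this forces $n=0$, and the $n=0$ term is present precisely for $0<x<b$. Hence $[f,g]$ is supported in $(0,b)$ and equals $\tfrac1b\,(f\ol g)(x/b)$ there. For part~(i) this turns condition~(a) into a statement about $f\ol g$: the substitution $x=bt$ gives $\int_0^b\bigl|\tfrac1b(f\ol g)(x/b)\bigr|^2dx=\tfrac1b\int_0^1|fg|^2\,dt$, so $[f,g]\in L^2(0,1)$ if and only if $fg\in L^2(0,1)$, yielding $\D(C_g)=\{f:fg\in L^2(0,1)\}$. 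The same substitution applied to $\pin{[f,g]}{e_n}$ evaluates the entries $\pin{f}{g_n}$ and hence gives the explicit form of $C_gf$ in the statement.

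For part~(ii) I would run the identical count on the sum in Corollary~\ref{th_weigh_exp}(iii). For $x\in(0,1)$ the term $f\bigl(x-\tfrac nb\bigr)\ol{g\bigl(x-\tfrac nb\bigr)}$ is nonzero only when $b(x-1)<n<bx$, which again forces $n=0$ for every $x\in(0,1)$. Thus the whole sum reduces to $f(x)\ol{g(x)}$, so $T_gf(x)=\tfrac1b f(x)|g(x)|^2$, and condition~(b) of Corollary~\ref{th_weigh_exp}(iii) becomes exactly $f|g|^2\in L^2(0,1)$.

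It remains to show that this single condition already describes $\D(T_g)$, i.e.\ that condition~(b) subsumes condition~(a); this is the one step that needs a genuine argument rather than interval counting, and I regard it as the main obstacle. I would prove $f|g|^2\in L^2(0,1)\Rightarrow fg\in L^2(0,1)$ by splitting $(0,1)$ into $\{|g|\ge1\}$, where $|fg|^2=|f|^2|g|^2\le|f|^2|g|^4$, and $\{|g|<1\}$, where $|fg|^2\le|f|^2$, and using $f\in L^2(0,1)$. (Alternatively one invokes the general inclusion $\D(T_g)\subseteq\D(C_g)$, which makes (a) automatic for $f\in\D(T_g)$.) Combining the pieces gives $\D(T_g)=\{f:f|g|^2\in L^2(0,1)\}$ with $T_gf=\tfrac1b f|g|^2$, the claimed multiplication operator; all the remaining content is the elementary verification that for $0<b\le1$ only the index $n=0$ contributes.
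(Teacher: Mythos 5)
Your proposal is correct and follows exactly the route the paper intends: Corollary \ref{cor_exp_b<=1} is stated there without proof as a ``particular case'' of Corollary \ref{th_weigh_exp}, and your index count showing that only $n=0$ survives when $0<b\le 1$ is the whole content of that specialization. Your extra care in checking that condition (b) absorbs condition (a) (either by the $\{|g|\ge 1\}$ / $\{|g|<1\}$ splitting or by the general inclusion $\D(T_g)\subseteq\D(C_g)$) fills in a step the paper leaves implicit, and both of your arguments for it are valid.
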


We recall some facts about $\mathcal{E}(g,1)$ that can be found in \cite[Theorem 10.10]{heil}. 
\begin{rem}
	\label{rem_6.4}
	The sequence $\mathcal{E}(g,1)$ is 
	\begin{itemize}
		\item complete in $L^2(0,1)$ if and only if $g(x)\neq 0$ a.e. in $(0,1)$;
		\item minimal if and only if $g(x)\neq 0$ a.e.\ and $1/\ol{g}\in L^2(0,1)$ (and in this case its biorthogonal sequence is $\mathcal{E}(1/\ol{g},1)$);
		\item a Bessel sequence if and only if $g$ is bounded above a.e. in $(0,1)$; 
		\item a Riesz basis of $L^2(0,1)$ if and only if it is a frame of $L^2(0,1)$  if and only if $g$ is bounded above and away from zero a.e. in $(0,1)$.
	\end{itemize}
\end{rem}

By Theorem \ref{pro_lower_transl} we can state the following. 
\begin{cor}
	\label{cor_6.5}
	Let $0<b\leq1$. The sequence $\mathcal{E}(g,b)$ is a lower semi-frame of $L^2(0,1)$ if and only if $g$ is bounded away from zero a.e. in $(0,1)$. Moreover, if $\mathcal{E}(g,b)$ is a lower semi-frame, then the canonical dual is the Bessel sequence $\mathcal{E}(b/\ol{g},b)$.
\end{cor}

\begin{rem}
	\label{rem_upp_trans}
	The previous result is in line with Proposition \ref{car_seq_T_xi} and Corollary \ref{cor_exp_b<=1}.  \\	
	As a continuation of a discussion in Section \ref{secSeq_1seq}, when $\mathcal{S}(g,b)$ for $0<b\leq1$ is an upper semi-frame, then we cannot always find a dual lower semi-frame by inverting the frame operator. Indeed, if  $\mathcal{S}(g,b)$  is an upper semi-frame for $0<b\leq1$, then  $\mathcal{E}(g,1)\subset R(S_g)=R(T_g)$ if and only if $1/g\in L^2(0,1)$, which is not always the case.
\end{rem}

Throughout the rest of the section we are going to often use the characterization below. It is stated in \cite{nielsikic} and it is a direct consequence of Theorem 8 of the fundamental paper of Hunt, Muckenhoupt and Wheeden \cite{huntmuckwhee}. 
We say that $\omega\in L^1(0,1)$ belongs to the
{\it Muckenhoupt's class} $\mathcal{A}_2(0,1)$ if $\omega(x)> 0$ for a.e. $x\in (0,1)$ and
$$
\sup_I \left(\frac{1}{|I|}\int_I \omega(x) dx\right)\left( \frac{1}{|I|}\int_I \frac{1}{\omega(x)}dx\right )<\infty
$$ 
where the supremum is taken over all intervals $I\subseteq (0,1)$. \\
The characterization is the following (see also \cite[Theorem 10.19]{heil}): a sequence of translates $\mathcal{T}(\varphi,a)$ of $\varphi\in L^2(\R)$ is a Schauder basis with respect to the ordering 
\begin{equation}
	\label{ord_Z}
	\Z=\{0,1,-1,2,-2,\dots\}
\end{equation}
if and only if $p_\varphi\in A_2(0,1)$. 

In the setting of weighted exponentials this result is formulated as follows (see \cite[Theorem 5.15]{heil}). Let $g\in L^2(0,1)$. Then the sequence $\mathcal{E}(g,1)=\{g_n\}_{n\in \Z}$ is a Schauder basis with respect to the ordering $\Z=\{0,1,-1,2,-2,\dots\}$ if and only if $|g|^2\in A_2(0,1)$.

\begin{exm}
	Every nonnegative measurable function $\omega:(0,1)\to \C$, which is bounded above and away from zero a.e., belongs to $A_2(0,1)$. The converse is not true. For instance, the function $\omega(x)=x^\alpha$ for $x\in (0,1)$ and $|\alpha|< 1$ is in $A_2(0,1)$ (see \cite[Example 7.1.7]{Grafakos}).
\end{exm}

As promised in Section \ref{subsecSeq_low_rec} we give a new example of a lower semi-frame that does not generate duality on the whole space. It is formulated as weighted exponentials sequence and takes advantage of the characterization above for a suitable function $g$.

\begin{exm}
	\label{countex_transl}
	Suppose that a weighted exponentials sequences $\mathcal{E}(g,b)=\{g_n\}_{n\in \Z}$, ordered as in \eqref{ord_Z}, is a lower semi-frame of $L^2(0,1)$. This means that $g$ is bounded away from zero, in particular $\{g_n\}_{n\in \Z}$ is minimal by Corollary \ref{cor_6.5} and Remark \ref{rem_6.4}.  
	If the duality in \eqref{weak_rec_1seq(b)} extends to the whole $L^2(0,1)$, i.e.
	$$
	f=\sum_{n\in \Z} \pin{f}{g_n}e_n/\ol{g} \qquad \forall f\in L^2(0,1),
	$$
	then $\{g_n\}_{n\in \Z}$ turns out to be a Schauder basis. Hence, as said after \eqref{ord_Z}, 
	$|g|^2\in A_2(0,1)$. What we are going to do is to construct a function $g\in L^2(0,1)$ which is bounded away from zero and such that $|g|^2\notin A_2(0,1)$. \\
	We begin by noting that the series $\sum_{j=2}^\infty j^{-3j}$ is convergent to a positive number less than $1$. Thus we can define for $x\in (0,1)$	$$g(x)=\sum_{k=2}^{\infty} k^k \chi_{(\sum_{j=2}^{k-1}j^{-3j},\sum_{j=2}^{k}j^{-3j})}(x)+\chi_{(\sum_{j=2}^\infty j^{-3j},1)}(x)$$
	where $\sum_{j=2}^{1}j^{-3j}$ is $0$ by definition and $\chi_{(a,b)}$ denotes the characteristic function of $(a,b)$. Also $1/g$ is a well-defined function on $[0,1]$,
	$$1/g(x)=\sum_{k=2}^{\infty} k^{-k} \chi_{(\sum_{j=2}^{k-1}j^{-3j},\sum_{j=2}^{k}j^{-3j})}(x)+\chi_{(\sum_{j=2}^\infty j^{-3j},1)}(x),$$
	and $g$ is bounded away from zero (with bound $1$). We have that $g,1/g\in L^2(0,1)$. Indeed  
	$$
	\int_0^1 g(x)^2dx=\sum_{k=2}^{\infty} k^{2k}k^{-3k}+1-\sum_{k=2}^{\infty} k^{-3k}<2\sum_{k=2}^{\infty} k^{-2}+1<\infty,
	$$
	$$
	\int_0^1 \frac{1}{g(x)^{2}}dx=\sum_{k=2}^{\infty} k^{-2k}k^{-3k}+1-\sum_{k=2}^{\infty} k^{-3k}<2\sum_{k=2}^{\infty} k^{-6}+1<\infty.
	$$
	Now we prove that $g^2\notin A_2(0,1)$. Let $k>2$ and $I_k=[\sum_{j=2}^{k}j^{-3j}-\epsilon_k,\sum_{j=2}^{k}j^{-3j}+\epsilon_k]$ where $0<\epsilon_k<(k+1)^{-3(k+1)}$. The interval $I_k$ is chosen so that the first half part of $I_k$ is contained in $[\sum_{j=2}^{k-1}j^{-3j},\sum_{j=2}^{k}j^{-3j}]$, where the value of $g$ is $k^k$, and the second half part of $I_k$ is contained in $[\sum_{j=2}^{k}j^{-3j},\sum_{j=2}^{k+1}j^{-3j}]$, where the value of $g$ is $(k+1)^{(k+1)}$. Therefore 
	$\frac{1}{|{I_k}|}\int_{I_k} g(x)^2dx=\frac{1}{2}(k^{2k}+(k+1)^{2(k+1)})$. In a similar way, $\frac{1}{|{I_k}|}\int_{I_k} \frac{1}{g(x)^2}dx=\frac{1}{2}(k^{-2k}+(k+1)^{-2(k+1)})$; so 
	\begin{align*}
		\frac{1}{|I_k|}\int_{I_k} g(x)^2dx\frac{1}{|{I_k}|}\int_{I_k} \frac{1}{g(x)^2}dx&=\frac{1}{4}(2+k^{2k}(k+1)^{-2(k+1)}+(k+1)^{2(k+1)}k^{-2k})\\
		&\geq \frac{1}{4}(k+1)^2.
	\end{align*}
	Therefore 
	$$\sup_{k>2}\left (\frac{1}{|I_k|}\int_{I_k} g(x)dx\frac{1}{|{I_k}|}\int_{I_k} \frac{1}{g(x)}dx\right)=\infty,$$ i.e. $g^2\notin A_2(0,1)$. The desired example is now concluded. 
\end{exm}

We now make some comparisons with Remark \ref{rem_cntrex_lower} to highlight the features of this example. 
Both Example \ref{countex_transl} and Remark \ref{rem_cntrex_lower}(i) consist of a  lower semi-frame $\xi$ with $\D(C_\xi)$ dense and the norm of the elements is constant. 
However, the sequence in Remark \ref{rem_cntrex_lower}(i) is quite abstract, indeed it involves an infinite Hilbert spaces sum.  Example \ref{countex_transl} has instead the quality of being a very simple sequence (indeed of weighted exponentials).  
On the other hand, the lower semi-frame $\xi=\{e_1+e_n\}_{n\geq 2}$ in Remark \ref{rem_cntrex_lower}(ii) is also very simple, but $\D(C_\xi)$ is not dense, in contrast to Example \ref{countex_transl}.

We end with two more examples showing that for a lower semi-frame $\xi$ the frame operator $S_\xi$ may be not closed and may be conditionally convergent. 
In contrast, we have seen that the generalized frame operator $T_\xi$ has better properties: it is self-adjoint and unconditionally defined. Once again the examples involve sequences of weighted exponentials.

\begin{exm}
	\label{exm_S_notclosed}
	Let $g\in L^2(0,1)$ be bounded away from zero but not above, $g>0$ a.e. and $g^2\notin A_2(0,1)$ (for instance the function $g$ in Example \ref{countex_transl}). We consider the sequence $\mathcal{E}(g,1)=\{g_n\}_{n\in \Z}$ ordered as in \eqref{ord_Z} and suppose that the frame operator $S_g$ of $\mathcal{E}(g,1)$ is closed. By Corollary \ref{cor_exp_b<=1}, $T_g$ is the multiplication operator $M_{g^2}$ by $g^2$ and we have also $\mathcal{E}(1/g,1)\subset \D(S_g)$. Denoting by $\ol{L}$ the closure of the restriction $L$ of $M_{g^2}$ to span$(\mathcal{E}(g,1))$, we can then affirm  that $\ol{L}\subseteq S_g$. \\
	Actually $\ol{L}$ coincide with $M_{g^2}$. Indeed, the closure of span$(\mathcal{E}(g,1))$, under the graph norm $\nor_{M_{g^2}}$ of $M_{g^2}$, is $\D(M_{g^2})$. To prove this, let $h\in \D(M_{g^2})$ such that $0=\pin{1/ge_n}{h}_{M_{g^2}}=\pin{1/ge_n}{h}+\pin{e_ng}{g^2h}=\pin{e_n}{h(g^3+1/g)}$ for all $n\in \Z$.  Thus the function $h(g^3+1/g)$ of $L^1(0,1)$ is null a.e. in $(0,1)$, i.e. $h=0$ a.e. in $(0,1)$. \\
	In conclusion, $S_g=M_{g^2}=T_g$, which contradicts Proposition \ref{pro_cs_S=T_2}. Therefore, $S_g$ is not closed. 
\end{exm}

\begin{exm}
	\label{ST_translates}
	Let $g\in L^2(0,1)$ and $\xi:=\mathcal{E}(g,1):=\{g_n\}_{n\in \Z}$ ordered as in \eqref{ord_Z}. 
	Since $\n{g_n}$ is constant for all $n\in \Z$, by \cite[Lemma 3.6.9]{Chris}, $\xi$ is an unconditional Schauder basis of $L^2(0,1)$ if and only if $\xi$ is a Riesz basis of $L^2(0,1)$, i.e. $g$ is bounded above and away from zero a.e. in $(0,1)$. \\	
	Assume that $|g|^2\in A_2(0,1)$ is bounded away from zero but not above. Hence $\xi:=\{g_n\}_{n\in \Z}$ is a lower semi-frame and Schauder basis with respect to the ordering $\Z=\{0,1,-1,2,-2,\dots\}$, but it is not an unconditional Schauder basis. Then there exists a bijection $\sigma:\Z\to \N$ such that $\xi'=\{g_{\sigma(n)} \}_{n\in \Z}$ is not a Schauder basis. By Proposition \ref{pro_cs_S=T_2} and recalling that $T_\xi$ is unconditionally defined, we have $S_{\xi'}\neq T_{\xi'}=T_{\xi}=S_{\xi}$. In conclusion, the frame operator of $\mathcal{E}(g,1)$ depends on the chosen ordering. 
\end{exm}

\section*{Conclusions}

In this paper we have seen how the operator $T_\xi$ is more appropriate than $S_\xi$ in the context of non-Bessel sequences.  
To summarize, $T_\xi$ is unconditionally defined, positive self-adjoint (in the space $\H_\xi=\ol{\D(C_\xi)}[\nor]$) and when $\xi$ is a lower semi-frame the canonical Bessel dual of $\xi$ is defined in terms of $T_\xi^{-1}$, as $T_\xi^{-1}P\xi$ (where $P$ is the projection onto $\H_\xi$). Moreover, $T_\xi^{-1/2}P\xi$ is a Parseval frame of $\H_\xi$.

\section*{Acknowledgments}

This work was partially supported by the ``Gruppo Nazionale per l'Analisi Matematica, la Probabilit\`{a} e le loro Applicazioni'' (GNAMPA-INdAM).

\vspace*{0.5cm}
\begin{center}
\textsc{Rosario Corso, Dipartimento di Matematica e Informatica} \\
\textsc{Università degli Studi di Palermo, I-90123 Palermo, Italy} \\
{\it E-mail address}: {\bf rosario.corso02@unipa.it}
\end{center}

\end{document}